\newtheorem{theorem}{Theorem}[section]
\newtheorem{lemma}[theorem]{Lemma}
\newtheorem{proposition}[theorem]{Proposition}
\DeclareMathOperator{\Ai}{Ai}
\DeclareMathOperator{\diag}{diag}
\DeclareMathOperator{\im}{Im}
\renewcommand{\Im}{\im}
\theoremstyle{definition}
\newtheorem{remark}[theorem]{Remark}
\numberwithin{equation}{section}
\begin{document}
\pagenumbering{arabic}

\title{The tacnode Riemann-Hilbert problem}
\author{Arno Kuijlaars\footnote{Department of Mathematics, KU Leuven, Celestijnenlaan 200 B bus 2400, 
3001 Leuven, Belgium, e-mail: arno.kuijlaars@wis.kuleuven.be}
}
\date{\ }
\maketitle

\begin{abstract}The tacnode Riemann-Hilbert problem is a $4 \times 4$ matrix valued
RH problem that appears in the description of the local behavior
of two touching groups of non-intersecting Brownian motions. The same
RH problem was also found by Duits and Geudens to describe a new critical 
regime in the two-matrix model.

Delvaux gave integral representations for some of the entries of the
$4 \times 4$ matrix. We complement this work by presenting integral representations
for all of the entries. As a consequence we give an explicit
formula for the Duits-Geudens critical kernel
\end{abstract}

\section{Introduction}
The tacnode Riemann-Hilbert (RH) problem is a $4 \times 4$ matrix-valued RH problem that
first appeared in the asymptotic analysis of two touching groups
of nonintersecting Brownian motions, a so-called tacnode. The positions of
the non-intersecting Brownian motions are a determinantal point process that 
in a double scaling limit around the tacnode leads to the tacnode process.
The one-time correlation functions of the tacnode process were 
expressed in terms of the tacnode RH problem in \cite{DKZ}. The tacnode RH problem
is related to the Hastings-McLeod solution of the Painlev\'e II equation
as was also discussed in \cite{DKZ}.

The tacnode problem was also analyzed in \cite{AFvM,FV,Joh} using different techniques. 
In these papers the tacnode kernel and its multi-time extension are expressed in 
terms of integrals with resolvents of Airy integral operators 
acting on a half-line. These expressions are very different from the RH formulation.

Recently, Delvaux \cite{Del2} made the connection between the two sets of formulas by presenting
integral representations for some of the entries of the solution
of the tacnode RH problem. These entries are exactly the ones that play a role for the
tacnode kernel in \cite{DKZ}. With these explicit formulas Delvaux could make the connection
between the formulas in \cite{DKZ} and the ones by Ferrari and Vet\H{o} \cite{FV} for the asymmetric tacnode.
The paper \cite{Del2} was inspired by the paper \cite{BLS} by Baik, Liechty, and Schehr, where
a connection between different sets of formulas for the maximal height and position 
of the Airy$_2$ process was made.

The aim of this paper is to complement the work of \cite{Del2} by providing integral representations
for all the entries of the tacnode RH problem. Some of these remaining entries appear in
the description of a critical kernel appearing in the two-matrix model 
as shown by Duits and Geudens \cite{DG}. We therefore find explicit integral
formulas for the Duits-Geudens critical kernel. 

In section \ref{sec:statement} we recall the tacnode RH problem with some of its properties,
and in particular the connection with the Hastings-McLeod solution of Painlev\'e II. The
main results of this paper are stated in Theorems \ref{theo:ODEsolutions} and \ref{theo:RHsolution} 
below. We compare the solution of the tacnode RH problem with the explicit solution of
the usual $2\times 2$ matrix-valued RH problem for the Hastings-McLeod solution in section \ref{comparison}.

The proofs  of the results are in section \ref{sec:proofs}. A key role is played by 
Lemma \ref{lem:psisolutions}  that describes solutions to a certain ODE system \eqref{psi:ODE}. 
The proof of this lemma follows along the lines of certain proofs in \cite{Del2}. We give full details about
the calculations in section \ref{sec:appendix}. 
Following \cite{Del2} we briefly mention the tacnode kernel in section \ref{tacnodekernel}.
The implications of Theorem \ref{theo:RHsolution} for the Duits-Geudens critical kernel 
are discussed in section \ref{sec:kernels}.

A variation of the tacnode RH problem for the hard-edge tacnode and 
the chiral two-matrix model appears in \cite{Del1,DGZ}. It may be possible that explicit
integral representations for the solution of these RH problems can be found as well.
Other recent contributions \cite{ACJvM,AJvM,BC,GZ} discuss further connections and properties
of the tacnode process.

\section{Statement of results} \label{sec:statement}

\subsection{The tacnode RH problem}

\begin{figure}[t]
\vspace{14mm}
\begin{center}
   \setlength{\unitlength}{1truemm}
   \begin{picture}(100,70)(-5,2)
       \put(40,40){\line(1,0){40}}
       \put(40,40){\line(-1,0){40}}
       \put(40,40){\line(2,3){15}}
       \put(40,40){\line(2,-3){15}}
       \put(40,40){\line(-2,3){15}}
       \put(40,40){\line(-2,-3){15}}
       \put(40,40){\thicklines\circle*{1}}
       \put(39.3,36){$0$}
       \put(60,40){\thicklines\vector(1,0){.0001}}
       \put(20,40){\thicklines\vector(-1,0){.0001}}
       \put(50,55){\thicklines\vector(2,3){.0001}}
       \put(50,25){\thicklines\vector(2,-3){.0001}}
       \put(30,55){\thicklines\vector(-2,3){.0001}}
       \put(30,25){\thicklines\vector(-2,-3){.0001}}

       \put(60,41){$\Gamma_0$}
       \put(47,57){$\Gamma_1$}
       \put(30,57){$\Gamma_2$}
       \put(16,41){$\Gamma_3$}
       \put(30,20){$\Gamma_4$}
       \put(47,20){$\Gamma_5$}
			
			\put(65,48){$\Omega_0$}
			\put(38,63){$\Omega_1$}
			\put(10,48){$\Omega_2$}
			\put(10,30){$\Omega_3$}
			\put(38,15){$\Omega_4$}
			\put(65,30){$\Omega_5$}
    
       \put(81,40){$\small{\begin{pmatrix}0&0&1&0\\ 0&1&0&0\\ -1&0&0&0\\ 0&0&0&1 \end{pmatrix}}$}
       \put(55,62){$\small{\begin{pmatrix}1&0&0&0\\ -1&1&0&0\\ 1&0&1&1\\ 0&0&0&1 \end{pmatrix}}$}
       \put(-4,62){$\small{\begin{pmatrix}1&1&0&0\\ 0&1&0&0\\ 0&0&1&0\\ 0&-1&-1&1 \end{pmatrix}}$}
       \put(-27,40){$\small{\begin{pmatrix}1&0&0&0\\ 0&0&0&-1\\ 0&0&1&0\\ 0&1&0&0 \end{pmatrix}}$}
       \put(-2,15){$\small{\begin{pmatrix}1&-1&0&0\\ 0&1&0&0\\ 0&0&1&0\\ 0&-1&1&1 \end{pmatrix}}$}
       \put(55,15){$\small{\begin{pmatrix}1&0&0&0\\ 1&1&0&0\\ 1&0&1&-1\\ 0&0&0&1 \end{pmatrix}}$}
  \end{picture}
  \caption{The figure shows the jump contours $\Gamma_k$ in the complex plane and the corresponding jump matrices
   $J_k$, $k=0,\ldots,5$, in the tacnode RH problem. $\Omega_k$ is the sector bounded by the two rays $\Gamma_k$ and $\Gamma_{k+1}$. \label{fig:tacnodejumps}}
\end{center}
\end{figure}
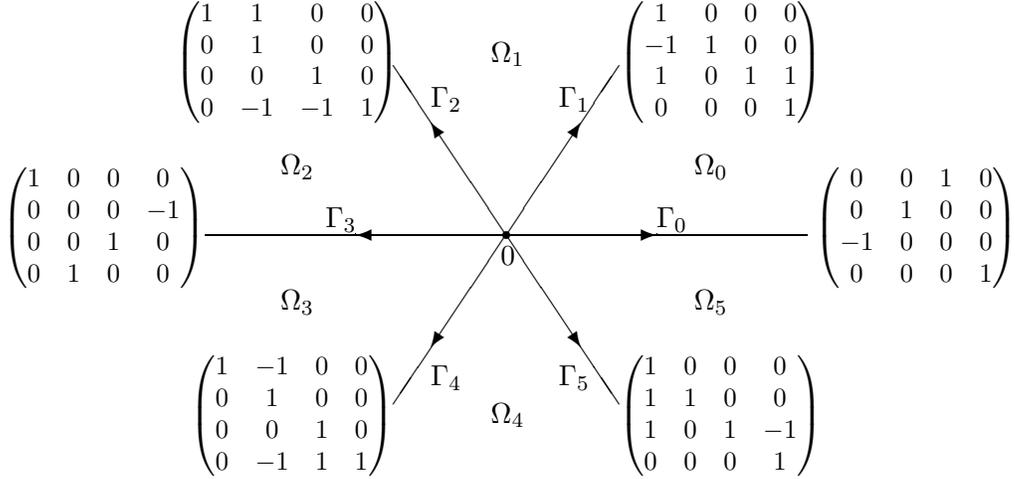

The tacnode RH problem asks for a $4 \times 4$ matrix-valued function
\[ M : \mathbb C \setminus  \Gamma_{M} \to \mathbb C^{4 \times 4} \]
which is defined and analytic outside a set $\Gamma_{M}$ which is a union of six rays
\begin{equation} \label{eq:GammaM} 
	\Gamma_{M} = \bigcup_{k=0}^5 \Gamma_k, \qquad \Gamma_k = \{ z \mid \arg z = \frac{\pi}{3} k \},
	\end{equation}
as shown in Figure \ref{fig:tacnodejumps}. Each ray is oriented from the origin to infinity.
The orientation induces $\pm$-sides on each ray, where the $+$-side is on the left and the $-$-side is on the right
as one traverses the ray according to its orientation.  We ask that $M$ has
continuous boundary values $M_{\pm}$ on each of the rays that satisfy the jump condition
\begin{equation} \label{eq:jumpsM} 
	M_+ = M_- J_k \qquad \text{ on } \Gamma_k \text{ for } k = 0, 1, \ldots, 5,
	\end{equation}
where the jump matrix $J_k$ on $\Gamma_k$ is also shown in Figure \ref{fig:tacnodejumps}. 

The RH problem depends on a number of parameters $ r_1, r_2, s_1, s_2, \tau$ that
appear in the asymptotic condition for $M$ via two functions
\begin{equation} \label{eq:thetas}
\begin{aligned}
\theta_1(z) & = \frac{2}{3} r_1 (-z)^{3/2} + 2 s_1 (-z)^{1/2}, && z \in \mathbb C \setminus [0, \infty), \\
\theta_2(z) & = \frac{2}{3} r_2 z^{3/2} + 2s_2 z^{1/2},  && z \in \mathbb C \setminus (-\infty, 0],
\end{aligned} 
\end{equation}
where we use principal branches for the fractional exponents.
The asymptotic condition is 
\begin{multline}
\label{eq:asymptoticM} M(z) =
\left(I+  \frac{M^{(1)}}{z} + O (z^{-2})\right)
\diag((-z)^{-1/4},z^{-1/4},(-z)^{1/4},z^{1/4})
\\ \times \frac{1}{\sqrt{2}}
\begin{pmatrix} 1 & 0 & -i & 0 \\
0 & 1 & 0 & i \\
-i & 0 & 1 & 0 \\
0 & i & 0 & 1 \\
\end{pmatrix}
\diag\left(e^{-\theta_1(z) + \tau z},e^{-\theta_2(z) - \tau z},e^{\theta_1(z) +\tau z},e^{\theta_2(z) - \tau z}\right).
\end{multline}
The residue matrix $M^{(1)}$ is independent of $z$ but depends on the parameters.

\begin{remark}
In the papers \cite{DKZ,DG} the tacnode RH problem was formulated on a union of ten rays. 
Here we choose to combine the two jumps in each of the open quadrants which reduces the number of 
rays by four. It is easy to see that the two RH problems are equivalent. 
\end{remark}

\begin{proposition} \label{prop:existence}
Suppose that the parameters $r_1, r_2, s_1, s_2, \tau$ are real with  $r_1 > 0$ and $r_2 > 0$.
Then the RH problem \eqref{eq:jumpsM}--\eqref{eq:asymptoticM} has a unique solution.
\end{proposition}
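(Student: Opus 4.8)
The plan is to follow the standard route for Painlev\'e-type Riemann--Hilbert problems: establish uniqueness by an elementary Liouville argument, reduce existence to a vanishing lemma via Fredholm theory, and then prove the vanishing lemma using the algebraic structure of the jump matrices together with the hypotheses $r_1 > 0$ and $r_2 > 0$.

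\textbf{Uniqueness.} First I would check from Figure~\ref{fig:tacnodejumps} that $\det J_k = 1$ for each $k = 0, \ldots, 5$ (the matrices $J_1, J_2, J_4, J_5$ are unipotent, and $J_0, J_3$ are block rotations), so that $\det M$ extends analytically across every ray $\Gamma_k$. The prefactor on the right-hand side of \eqref{eq:asymptoticM} also has determinant $1$, so $\det M(z) = 1 + O(z^{-1})$ as $z \to \infty$; since the $(\mp z)^{\pm 1/4}$ factors permit only a mild singularity of $M$ at the origin, $\det M$ has a removable singularity there and hence $\det M \equiv 1$, so $M$ is everywhere invertible off $\Gamma_M$. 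If $M_1, M_2$ are two solutions, then $M_1 M_2^{-1}$ has trivial jump on every $\Gamma_k$, tends to $I$ at infinity, and has a removable isolated singularity at the origin; by Liouville's theorem $M_1 M_2^{-1} \equiv I$.

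\textbf{Reduction of existence to a vanishing lemma.} Next I would divide $M$ on the right by the explicit prefactor in \eqref{eq:asymptoticM}, obtaining an equivalent RH problem for a matrix $N$ with $N(z) \to I$ as $z \to \infty$, whose jumps are supported on $\Gamma_M$ together with the two half-lines $[0,\infty)$ and $(-\infty,0]$, and are the conjugates of the $J_k$ and of the branch-cut jumps by $\diag\big(e^{-\theta_1+\tau z}, e^{-\theta_2-\tau z}, e^{\theta_1+\tau z}, e^{\theta_2-\tau z}\big)$ and the fixed unitary factor. Because $r_1 > 0$ and $r_2 > 0$, the cubic terms $\tfrac{2}{3} r_j (\pm z)^{3/2}$ in \eqref{eq:thetas} dominate in every relevant sector, so each of these jump matrices tends to the identity exponentially fast along its contour; this places the problem in the setting where the associated singular integral operator on $L^2$ of the contour is Fredholm of index $0$, and hence the inhomogeneous problem is solvable as soon as the homogeneous problem --- same jumps, but with $N^h(z) \to 0$ at infinity and the same mild behavior at the origin --- admits only the trivial solution $N^h \equiv 0$.

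\textbf{The vanishing lemma and the main obstacle.} The remaining and only substantive step is this vanishing lemma, and I expect it to be the main obstacle. Given a homogeneous solution $M^h$, I would use the reflection $z \mapsto \bar z$, under which $\Gamma_M$ is invariant and the real rays $\Gamma_0, \Gamma_3$ are fixed, to form an auxiliary matrix $A(z) = M^h(z)\, C\, \big(\overline{M^h(\bar z)}\big)^{T}$ for a suitable constant matrix $C$; the entrywise-conjugated factor is analytic in $z$, and a ray-by-ray check of the jump relations should show that $A$ extends to a single analytic function on the open upper half-plane, continuous up to $\mathbb R$ and decaying at infinity. Then Cauchy's theorem gives $\int_{\mathbb R} A_+(x)\,dx = 0$, and inserting the jump relations on $\Gamma_0$ and $\Gamma_3$ rewrites the integrand as a Hermitian form built from the boundary values of $M^h$ with a positive semidefinite coefficient matrix --- and it is exactly here that the signs $r_1 > 0$, $r_2 > 0$ (with $s_1, s_2, \tau$ real) are used. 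This forces the relevant boundary values of $M^h$ to vanish on a set of positive measure, hence $M^h \equiv 0$ in the adjacent sectors, and then $M^h \equiv 0$ everywhere by the jump relations; equivalently, one may invoke the vanishing lemma of \cite{DKZ} directly. The delicate points are the choice of $C$, the verification that the jump contributions to $A$ cancel across all the rays, and pinning down the positivity that uses the hypotheses on the parameters.
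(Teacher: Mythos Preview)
The paper does not actually prove Proposition~\ref{prop:existence}; it simply records that the result was established for $\tau = 0$ in \cite{DKZ} and for $r_1 = r_2 = 1$, $s_1 = s_2$ in \cite{DG}, and refers to \cite{Del2} for the observation that the argument extends to the general case. So there is no in-paper proof to compare against.

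Your outline is precisely the strategy those references carry out: a Liouville-type uniqueness argument, the normalization $M \mapsto N$ with exponentially decaying jumps (using $r_1, r_2 > 0$) to put the problem into the Fredholm-of-index-zero framework, and a vanishing lemma proved via the Schwarz-reflected auxiliary matrix and a positivity argument on the real axis. In that sense your proposal is consistent with --- indeed a faithful reconstruction of --- the route the paper is pointing to. The caveats you flag (choice of the constant matrix $C$, checking that the jumps of $A$ cancel across all rays, and extracting the correct semidefinite form from the jump matrices on $\Gamma_0$ and $\Gamma_3$) are exactly the places where the work lies in \cite{DKZ}; none of them is a gap in your plan, just a step requiring care.
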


Proposition \ref{prop:existence} was proved  in the case $\tau = 0$ by Delvaux, Kuijlaars, 
and Zhang \cite{DKZ},
and in the case $r_1 = r_2 = 1$, $s_1 = s_2$ with general $\tau$ by Duits and Geudens \cite{DG}.
The proof in \cite{DG} extends to the general case as noted by Delvaux \cite{Del2}. 

% We give an outline of the arguments in the appendix.

\subsection{The Painlev\'e II connection}
The tacnode RH problem is related to the Hastings-McLeod solution of the Painlev\'e II equation, 
as was noted in the cited papers \cite{Del2,DKZ,DG}. The Painlev\'e II equation is
\begin{equation} \label{PainleveII} 
	q'' = t q + 2q^3 
	\end{equation}
and the Hastings-McLeod solution is the unique solution of \eqref{PainleveII} that satisfies
\[ q(t) = \Ai(t) (1+ o(1)) \qquad \text{ as }  t \to +\infty \]
where $\Ai$ denotes the usual Airy function.
We also need the related function
\begin{equation} \label{eq:u} 
	u = q'^2 - t q^2 - q^4, 
	\end{equation}
which satisfies $u' = - q^2$.

\begin{proposition} \label{prop:solutionU}
The solution $M$ of the tacnode RH problem satisfies  a differential equation
\begin{equation} \label{M:ODE}
	\frac{\partial}{\partial z} M = U M,
	 \end{equation}
with a  matrix $U$ that is explicitly given in terms of the Hastings-McLeod
solution $q$ of Painlev\'e II and $u$ from \eqref{eq:u} as follows: 
\begin{multline} U = \label{eq:U} \\
\left(\begin{smallmatrix} \tau - s_1^2 + \frac{u}{C} & \frac{\sqrt{r_2} q}{\gamma \sqrt{r_1} C} & i r_1 & 0 \\
	- \gamma \frac{\sqrt{r_1} q}{\sqrt{r_2} C} & - \tau + s_2^2 - \frac{u}{C} & 0 & i r_2 \\
	i\left(r_1 z - 2s_1 + \frac{s_1^4}{r_1} - \frac{2s_1^2 u}{r_1 C} + \frac{u^2-q^2}{r_1C^2}\right) & 
	\frac{i}{\gamma} \left(\sqrt{r_1 r_2} C (q'+ uq) - \frac{r_1^2 s_2^2 + r_2^2 s_1^2}{(r_1 r_2)^{3/2}} \frac{q}{C} \right) & 
\tau + s_1^2 - \frac{u}{C} & \frac{\sqrt{r_1} q}{\gamma \sqrt{r_2} C} \\	
	i \gamma \left( \sqrt{r_1 r_2} C (q'+ uq) - \frac{r_1^2 s_2^2 + r_2^2 s_1^2}{(r_1 r_2)^{3/2}} \frac{q}{C} \right)& 
	i\left(-r_2 z - 2s_2 + \frac{s_2^4}{r_2} - \frac{2s_2^2 u}{r_2 C} + \frac{u^2-q^2}{r_2C^2}\right) & 
	- \gamma \frac{\sqrt{r_2}q}{\sqrt{r_1}C} & -\tau - s_2^2 + \frac{u}{C}  
	\end{smallmatrix} \right).
	\end{multline}
Here $r_1, r_2, s_1, s_2, \tau$ are the parameters in the problem,
\begin{align} \label{eq:C}
  C & = (r_1^{-2} + r_2^{-2})^{1/3}, \\ 
	\label{eq:gamma}
	\gamma & = \exp \left( \frac{8}{3} \frac{r_1^2 - r_2^2}{(r_1^2 + r_2^2)^2} \tau^3 - 4 \frac{r_1s_1 - r_2 s_2}{r_1^2 + r_2^2} \tau\right),
	\end{align}
and the Painlev\'e functions $q, q'$, and $u$ that appear in \eqref{eq:U} are evaluated in
\begin{align} 	\label{eq:t}
	t & = \frac{2}{C} \left( \frac{s_1}{r_1} + \frac{s_2}{r_2}  -  \frac{2 \tau^2}{r_1^2 + r_2^2} \right).
	\end{align}		
\end{proposition}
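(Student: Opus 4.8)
The plan is to exploit the structure of the RH problem: since the jump matrices $J_k$ are all constant (independent of $z$) and the jump contours $\Gamma_k$ are rays through the origin, the logarithmic derivative $U := (\partial_z M) M^{-1}$ extends to an entire function of $z$. Indeed, on each ray $\Gamma_k$ one has $M_+ = M_- J_k$ with $J_k$ constant, so $(\partial_z M_+) = (\partial_z M_-) J_k$, and therefore $U_+ = (\partial_z M_+) M_+^{-1} = (\partial_z M_-) J_k J_k^{-1} M_-^{-1} = U_-$; thus $U$ has no jump across $\Gamma_M$, and since $M$ is analytic and invertible off $\Gamma_M$ (invertibility follows because $\det M \equiv 1$, which one checks from the asymptotics and the fact that $\det J_k = 1$ for each $k$), $U$ is entire. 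The next step is to read off the behavior of $U$ at infinity from the asymptotic condition \eqref{eq:asymptoticM}. Writing the right-hand side of \eqref{eq:asymptoticM} as $(I + M^{(1)}/z + O(z^{-2})) A(z) E(z)$ where $A(z)$ is the diagonal-times-constant prefactor and $E(z) = \diag(e^{\mp\theta_1(z)+\tau z}, \ldots)$, one computes $(\partial_z M) M^{-1}$ and finds that the exponentially growing factors $E(z)$ contribute a polynomial in $z$ (because $\theta_j$ involves $z^{3/2}$ and $z^{1/2}$, whose derivatives, after conjugation by $A(z)$ which carries the matching fractional powers $(-z)^{\pm 1/4}, z^{\pm 1/4}$, combine into entire — in fact polynomial — expressions), while the $A'(z) A(z)^{-1}$ and $M^{(1)}$ terms contribute $O(1)$. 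Hence $U(z)$ is a polynomial in $z$ of degree one: $U(z) = U_0 + z U_1$ with $U_1$ an explicit constant matrix (coming from the cubic terms $\tfrac23 r_j (\mp z)^{3/2}$) and $U_0$ a constant matrix whose entries involve the parameters and the first residue $M^{(1)}$.

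The next task is to identify $U_1$ and the parameter-only parts of $U_0$ directly from the expansion, and then to pin down the remaining entries of $U_0$ — the ones involving $M^{(1)}$ — in terms of Painlevé functions. For $U_1$: conjugating $\diag$ of the cubic exponential derivatives by the constant matrix $\tfrac1{\sqrt2}\begin{pmatrix}1&0&-i&0\\0&1&0&i\\-i&0&1&0\\0&i&0&1\end{pmatrix}$ produces precisely the off-diagonal pattern $\diag$-block with entries $i r_1$ in position $(1,3)$, $i r_1 z$ in $(3,1)$, $ir_2$ in $(2,4)$, $-ir_2 z$ in $(4,2)$, matching \eqref{eq:U}; this is a finite linear-algebra computation. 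For the remaining entries of $U_0$, the strategy is a compatibility/isomonodromy argument: one already knows (from the cited works \cite{DKZ,DG,Del2}) that $M$ also satisfies an equation in an auxiliary deformation parameter, or equivalently that the $2\times2$ upper-left block of a suitably transformed $M$ solves the Flaschka–Newell Lax pair for Painlevé II. Concretely, I would perform an explicit gauge/scaling transformation $M(z) \mapsto G \, M(c z + d)$ with constants $G, c, d$ chosen (in terms of $r_1, r_2, s_1, s_2, \tau$, and $C, \gamma, t$ of \eqref{eq:C}–\eqref{eq:t}) to normalize the problem, reducing $U$ to a canonical form whose entries are forced to be the standard Painlevé II coefficients; the functions $q, q', u$ and the point $t$ then emerge as the coordinates of that canonical form. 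The zero-curvature condition between the $z$-equation and the deformation equation yields exactly the Painlevé II ODE \eqref{PainleveII} for $q$ and the relation $u' = -q^2$, which selects the Hastings–McLeod solution once the asymptotics of $M$ (in particular the subleading behavior encoded in $M^{(1)}$ as the deformation parameter $\to +\infty$) are matched against the known Airy asymptotics.

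The main obstacle is the bookkeeping in the last step: translating the abstract statement ``$U_0$ depends on $M^{(1)}$, which is governed by Painlevé II'' into the precise closed-form entries displayed in \eqref{eq:U}, with the correct factors of $C$, $\gamma$, the combinations $s_1^2, s_1^4/r_1, (u^2-q^2)/(r_1 C^2)$, and so on. This requires (i) carefully carrying the principal-branch fractional powers through the conjugations without sign errors, (ii) solving for the components of $M^{(1)}$ using the next order of the asymptotic expansion (the $z^{-2}$ term gives one more equation that fixes the entries of $M^{(1)}$ not already determined, via the condition that $U$ be polynomial, i.e.\ that the putative $z^{-1}$ term in $(\partial_z M)M^{-1}$ vanish), and (iii) verifying the identification with the Flaschka–Newell coefficients under the explicit scaling. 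Since the referenced analysis in \cite{Del2,DKZ,DG} already establishes the existence of such an ODE and its Painlevé II character for the relevant sub-blocks, the remaining work is to propagate those identifications to the full $4\times4$ matrix and to record the explicit dependence on all five parameters; I would relegate the lengthy but routine computation verifying \eqref{eq:U} entry by entry to an appendix.
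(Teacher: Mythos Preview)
Your outline is correct and follows the standard Riemann--Hilbert/isomonodromy derivation: use constancy of the jump matrices to conclude that $U=(\partial_z M)M^{-1}$ has no jumps, use the asymptotic expansion \eqref{eq:asymptoticM} to see that $U$ is a polynomial of degree one in $z$, compute the leading part explicitly, and then identify the constant part (equivalently, the relevant entries of $M^{(1)}$) with Painlev\'e~II data via the compatibility with the deformation equations \eqref{M:Laxpair}. This is precisely the strategy carried out in Delvaux~\cite{Del2}, which the paper simply cites as its proof; so you have in fact supplied more detail than the paper itself does. Two small points worth tightening: you should also argue that $U$ has at worst a removable singularity at the origin (the rays all meet at $0$, and one needs the local behavior of $M$ there to conclude $U$ is genuinely entire rather than merely analytic on $\mathbb{C}\setminus\{0\}$), and the step ``the fractional powers combine into polynomial expressions after conjugation by $A(z)$'' deserves one explicit line of computation, since getting the branch choices of $(-z)^{\pm1/4}$ versus $z^{\pm1/4}$ right is exactly where sign errors creep into the entries of \eqref{eq:U}.
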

\begin{proof}
See Delvaux \cite{Del2}. Note however that the notation in \cite{Del2} is slightly different
from ours. The constant $\tau$ used in \cite{Del2} is equal to $\frac{2}{r_1^2 + r_2^2} \tau$,  
the constant $D$ in \cite{Del2} is equal to $ \frac{\sqrt{r_1}}{\sqrt{r_2}} \gamma$
and $\sigma$ is used instead of $t$.
\end{proof}

It is known that the Hastings-McLeod solution of Painlev\'e II has no poles on the real line
\cite{HM}. Therefore the linear system \eqref{eq:U} is well-defined for every choice of
real parameters $r_1, r_2, s_1, s_2$, and $\tau$.

There are also linear differential equations 
\begin{equation} \label{M:Laxpair} 
	\frac{\partial}{\partial s_1} M = V_1 M, \qquad \frac{\partial}{\partial s_2} M = V_2 M, \qquad 
	\frac{\partial}{\partial \tau} M = W M, 
	\end{equation}
	with explicitly known matrices $V_1$, $V_2$ and $W$. The Painlev\'e II equation \eqref{PainleveII} 
	arises
	as the compatibility condition for \eqref{M:ODE} 	and \eqref{M:Laxpair}, which can be viewed
	as a Lax pair. Note that the usual Lax pair for Painlev\'e II is of size $2 \times 2$,
	see \cite{FN,FIKN}, and also section \ref{comparison} below.

From \eqref{M:ODE} it follows that each column of $M$ is a solution to the linear system of ODEs
\begin{equation} \label{m:ODE}
	\frac{\partial m}{\partial z}  = U m, \qquad m = \begin{pmatrix} m_1 & m_2 & m_3 & m_4 \end{pmatrix}^T
	 \end{equation}
	with $U$ given by \eqref{eq:U}. 
To specify a solution of \eqref{m:ODE} it is enough to give $m_1$ and $m_2$ since 
\begin{align} \label{eq:m3}
	i r_1 m_3 & = m_1' - (\tau - s_1^2 + \frac{u}{C}) m_1 - \frac{\sqrt{r_2} q}{\gamma \sqrt{r_1} C} m_2, \\
	\label{eq:m4}
	i r_2 m_4 & = m_2' + \gamma \frac{\sqrt{r_1} q}{\sqrt{r_2} C} m_1 - (-\tau +s_2^2 - \frac{u}{C}) m_2,
	\end{align}
which follows from the special structure of $U$ in \eqref{eq:U}.

\subsection{Tracy-Widom functions}

We are going to construct six solutions $m^{(j)}$, $j=0, 1, \ldots, 5$, of \eqref{m:ODE}.
The formulas are based on functions that were first introduced by Tracy and Widom \cite{TW,TW2}.

For $t \in \mathbb R$, we use $K_t$ to denote the integral operator on $[t, \infty)$
with the Airy kernel
\begin{equation} \label{Airykernel} 
	\frac{\Ai(x) \Ai'(y) - \Ai'(x) \Ai(y)}{x-y}. 
	\end{equation}
Thus $K_t : L^2([t,\infty)) \to L^2([t, \infty))$ is defined by
\begin{equation} \label{Airyoperator} 
	(K_t f) (x) = \int_t^{\infty}  \frac{\Ai(x) \Ai'(y) - \Ai'(x) \Ai(y)}{x-y} \, f(y) \, dy. 
	\end{equation}
It is known that $I-K_t$ is invertible on $L^2([t,\infty))$, and we define two functions by
\begin{align} \label{Qt} 
	Q_t &= (I - K_t)^{-1} \Ai, \\
	P_t & = (I-K_t)^{-1} \Ai'. \label{Pt}
	\end{align}
We also put
\begin{equation} \label{Rt}
	R_t(x,y) = \frac{Q_t(x) P_t(y) - P_t(x) Q_t(y)}{x-y}.
	\end{equation}

Both $Q_t$ and $P_t$ are continuous (in fact real analytic) functions on $[t, \infty)$.
It is known that
\begin{equation} \label{QtRttt}  
	Q_t(t) = q(t), \qquad R_t(t,t) = u(t),
	\end{equation}
where $q$ is the Hastings-McLeod solution of \eqref{PainleveII} and $u$ is given by \eqref{eq:u},
 see e.g.\ section 2.3  in  \cite{TW2} and in particular formula (25).

\begin{lemma} \label{lem:IIKS}
Both $Q_t$ and $P_t$ extend to entire functions on the complex plane 
and
\begin{align} \label{Qt:asymptotics} 
	Q_t(x) & = (1+ O(x^{-1/2})) \Ai(x) = O\left(x^{-1/4} e^{- \frac{2}{3} x^{3/2}}\right) , \\
	P_t(x) & =  (1 + O(x^{-1})) \Ai'(x) = O\left(x^{1/4} e^{-\frac{2}{3} x^{3/2}} \right), \label{Pt:asymptotics} 
 \end{align}
as $x \to \infty$, uniformly for $-\pi + \varepsilon < \arg x < \pi - \varepsilon$, for every $\varepsilon  >0$.
\end{lemma}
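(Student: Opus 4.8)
The plan is to work directly from the two identities
\[
	(I-K_t)Q_t = \Ai, \qquad (I-K_t)P_t = \Ai' ,
\]
which define $Q_t$ and $P_t$ in \eqref{Qt}--\eqref{Pt} and which we rewrite as $Q_t = \Ai + K_t Q_t$ and $P_t = \Ai' + K_t P_t$; from the classical integral representation of the Airy kernel
\[
	\frac{\Ai(x)\Ai'(y)-\Ai'(x)\Ai(y)}{x-y} = \int_0^\infty \Ai(x+\xi)\,\Ai(y+\xi)\,d\xi, \qquad x,y\in\mathbb C
\]
(check by differentiating the right-hand side in $\xi$ and using $\Ai''(z)=z\Ai(z)$); and from the standard uniform Airy asymptotics $\Ai(x)=\tfrac{1}{2\sqrt\pi}x^{-1/4}e^{-\frac23 x^{3/2}}(1+O(x^{-3/2}))$ and $\Ai'(x)=-\tfrac{1}{2\sqrt\pi}x^{1/4}e^{-\frac23 x^{3/2}}(1+O(x^{-3/2}))$, valid as $x\to\infty$ with $|\arg x|\le\pi-\varepsilon$. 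Throughout, implied constants may depend on $\varepsilon$.

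For the entire extension, observe that by the integral representation $K_t(x,y)$ is entire in $x$ for each fixed $y\in[t,\infty)$ and obeys the Cauchy--Schwarz bound $|K_t(x,y)|\le\|\Ai(x+\cdot)\|_{L^2(0,\infty)}\,\bigl(\int_y^\infty\Ai(s)^2\,ds\bigr)^{1/2}$, whose first factor is locally bounded in $x$ and whose second factor decays faster than any exponential in $y$. Hence for every $g\in L^2([t,\infty))$ the function $x\mapsto\int_t^\infty K_t(x,y)g(y)\,dy$ is entire (dominated convergence plus Morera's theorem). Taking $g=Q_t$ and $g=P_t$ in the rewritten identities shows that $Q_t$ and $P_t$ agree a.e.\ on $[t,\infty)$ with entire functions, which is the first assertion. (The same follows by expanding $(I-K_t)^{-1}=I+\sum_{n\ge1}K_t^n$ and noting each iterated kernel is entire in its first argument, or from the IIKS structure of $K_t$.)

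For the asymptotics I would first extract the decay on the positive real axis: since $\|Q_t\|_{L^2},\|P_t\|_{L^2}<\infty$ (here one uses that $I-K_t$ is invertible) and $\|K_t(y,\cdot)\|_{L^2([t,\infty))}=O(y^{-3/4}e^{-\frac23 y^{3/2}})$ by the Cauchy--Schwarz bound above, one gets $(K_tQ_t)(y),(K_tP_t)(y)=O(y^{-3/4}e^{-\frac23 y^{3/2}})$, hence $Q_t(y)=O(y^{-1/4}e^{-\frac23 y^{3/2}})$ and $P_t(y)=O(y^{1/4}e^{-\frac23 y^{3/2}})$ as $y\to+\infty$. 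Next, for large $x$ with $|\arg x|\le\pi-\varepsilon$ I would bound $K_t(x,y)$ through the integral representation: along the ray $\xi\mapsto x+\xi$ one has $|x+\xi|\gtrsim|x|$ and $\tfrac{d}{d\xi}\re\bigl((x+\xi)^{3/2}\bigr)=\tfrac32\re\bigl((x+\xi)^{1/2}\bigr)\gtrsim|x|^{1/2}$, so $\re\bigl((x+\xi)^{3/2}-x^{3/2}\bigr)\gtrsim|x|^{1/2}\xi$ and therefore $|\Ai(x+\xi)|\lesssim|x|^{-1/4}e^{-\frac23\re(x^{3/2})}e^{-c|x|^{1/2}\xi}$, while $|\Ai(y+\xi)|\lesssim e^{-\frac23 y^{3/2}}$ for $y\ge0$; integrating in $\xi$ produces the decisive extra factor $|x|^{-1/2}$, giving $|K_t(x,y)|\lesssim|x|^{-3/4}e^{-\frac23\re(x^{3/2})}e^{-\frac23 y^{3/2}}$. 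Multiplying by $|Q_t(y)|$, respectively $|P_t(y)|$, and integrating over $[t,\infty)$ — which converges by the real-axis decay just obtained, and with the bounded piece $y\in[t,0]$, if present, treated the same way — gives $(K_tQ_t)(x),(K_tP_t)(x)=O\bigl(|x|^{-3/4}e^{-\frac23\re(x^{3/2})}\bigr)$. Comparing with $|\Ai(x)|\asymp|x|^{-1/4}e^{-\frac23\re(x^{3/2})}$ and $|\Ai'(x)|\asymp|x|^{1/4}e^{-\frac23\re(x^{3/2})}$ then yields $Q_t=\Ai+O(x^{-1/2}\Ai)$ and $P_t=\Ai'+O(x^{-1}\Ai')$, which are \eqref{Qt:asymptotics}--\eqref{Pt:asymptotics}.

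The step I expect to be the main obstacle is obtaining the sharp rates $O(x^{-1/2})$ and $O(x^{-1})$ \emph{uniformly up to} $\arg x=\pi-\varepsilon$, in particular near the positive real axis, where the distance from $x$ to $[t,\infty)$ is not bounded below and the naive estimate $|K_t(x,y)|\le\|\Ai(x+\cdot)\|_{L^2}\|\Ai(y+\cdot)\|_{L^2}$ only gives $(K_tQ_t)(x)=O(|x|^{-1/4}|\Ai(x)|)$, a half power short. Recovering the missing power requires the sharper $\xi$-integral bound above, which exploits that $\Ai(x+\xi)$ decays in $\xi$ at rate $\sim|x|^{1/2}$. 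A secondary point of care is the regime $\arg x$ close to $\pi$, where one must track the two exponential behaviours of $\Ai$ along the ray $x+\xi$; but there $|\Ai(x)|$ is already exponentially large and dominates everything.
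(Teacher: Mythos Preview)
Your approach is correct and genuinely different from the paper's. The paper invokes the Its--Izergin--Korepin--Slavnov theory of integrable operators: it writes
\[
\begin{pmatrix} Q_t(x) \\ P_t(x) \end{pmatrix} = Y(x) \begin{pmatrix} \Ai(x) \\ \Ai'(x) \end{pmatrix},
\]
where $Y$ solves an auxiliary $2\times 2$ Riemann--Hilbert problem on $[t,\infty)$ with the normalisation $Y(x)=I+O(1/x)$. The entire extension and the asymptotics \eqref{Qt:asymptotics}--\eqref{Pt:asymptotics} then fall out immediately from this representation and the standard Airy asymptotics, with no further estimates needed. Your route bypasses the RH machinery entirely: you work from the resolvent identity $Q_t=\Ai+K_tQ_t$, the Christoffel--Darboux-type formula $K_t(x,y)=\int_0^\infty\Ai(x+\xi)\Ai(y+\xi)\,d\xi$, and the observation that the decay of $\Ai(x+\xi)$ in $\xi$ at rate $\sim|x|^{1/2}$ produces the extra factor $|x|^{-1/2}$ after the $\xi$-integration. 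This is more hands-on but entirely self-contained, requiring only the classical Airy asymptotics and the invertibility of $I-K_t$. The IIKS argument is shorter and yields a full asymptotic expansion for free (since $Y$ has one), while your argument is more elementary and makes the mechanism for the $x^{-1/2}$ gain transparent.

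One remark: the obstacle you flag in your last paragraph is not actually an obstacle. Your sharp $\xi$-integral bound $|K_t(x,y)|\lesssim|x|^{-3/4}e^{-\frac23\re(x^{3/2})}\Ai(y)$ already holds uniformly for $|\arg x|\le\pi-\varepsilon$, including near the positive real axis, because the key inequality $\re((x+\xi)^{1/2})\gtrsim_\varepsilon|x|^{1/2}$ follows from $|\arg(x+\xi)|\le|\arg x|$ and $|x+\xi|\gtrsim_\varepsilon|x|$ throughout that sector. So the ``missing half power'' you worry about is already recovered by your own argument; the naive Cauchy--Schwarz bound is simply not needed.
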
 
\begin{proof}

According to the Its-Izergin-Korepin-Slavnov theory on integrable operators \cite{IIKS} 
(see also \cite{Dei}) we have that 
\begin{equation} \label{eq:IIKS1} 
	\begin{pmatrix} Q_t(x) \\ P_t(x) \end{pmatrix} = Y_+(x) \begin{pmatrix} \Ai(x) \\ \Ai'(x) \end{pmatrix}
	=  Y_-(x) \begin{pmatrix} \Ai(x) \\ \Ai'(x) \end{pmatrix}, 
	\qquad x \in (t, \infty). 
\end{equation}
where $Y$ is the unique solution of the RH problem:
\begin{itemize}
\item $Y : \mathbb C \setminus [t, \infty) \to \mathbb C^{2 \times 2}$ is analytic,
\item $Y$ has continuous boundary values $Y_+$ and $Y_-$ on $(t,\infty)$ that satisfy 
\begin{equation} \label{eq:IIKS2} 
	Y_+(x) = Y_-(x) \left( I - 2 \pi i \begin{pmatrix} \Ai(x) \\ \Ai'(x) \end{pmatrix} 
	\begin{pmatrix} \Ai'(x) & - \Ai(x) \end{pmatrix} \right),
	\end{equation}
	for $x \in (t,\infty)$,
\item $Y(x) = O( \log |x-t|)$ as $x \to t$,
\item $Y(x) = I + O(1/x)$ as $x \to \infty$. 
\end{itemize}

Thus $x \in \mathbb C \mapsto Y(x)  \begin{pmatrix} \Ai(x) \\ \Ai'(x) \end{pmatrix}$ provides 
the extension of 
$\begin{pmatrix} Q_t \\ P_t \end{pmatrix}$ into an entire function on the complex plane. 
The lemma then follows because of the asymptotic behavior of $Y$ and the
well known behavior
\begin{equation} \label{Airyasympt}
\begin{aligned} 
	\Ai(x) & = \frac{1}{2\sqrt{\pi} x^{1/4}} e^{-\tfrac{2}{3} x^{3/2}} ( 1 + O(x^{-3/2})) \\  
	\Ai'(x) & = \frac{-x^{1/4} }{2\sqrt{\pi}} e^{-\tfrac{2}{3} x^{3/2}} ( 1 + O(x^{-3/2})) 
	\end{aligned}
	\end{equation}
	as $x \to \infty$, $|\arg x| < \pi - \varepsilon$ of the Airy function.
\end{proof}

By \eqref{Rt} we then also have that $R_t$ extends to an entire function in the complex plane and
for every fixed $y$,
\begin{align} \label{Rt:asymptotics} 
	R_t(x,y) = O\left(x^{-3/4} e^{-\frac{2}{3} x^{3/2}} \right)  
 \end{align}
as $x \to \infty$ with $-\pi + \varepsilon < \arg x < \pi - \varepsilon$ for some $\varepsilon  >0$.

\subsection{Six solutions of \eqref{m:ODE}}

The functions $Q_t(x)$ and $R_t(x,t)$ from \eqref{Qt} and \eqref{Rt} appear 
explicitly in the integral formulas we have for the solutions of \eqref{m:ODE}.
We use
\[ \omega = e^{2\pi i/3}. \]
We also put 
\begin{equation} \label{eq:lambdamu}
	\lambda = \frac{r_2^2 - r_1^2}{r_1^2 + r_2^2} \tau, \qquad  
	\mu = \frac{2}{r_1^2 + r_2^2} \tau
	\end{equation}
and we recall the definitions of $C$, $\gamma$ and $t$ in \eqref{eq:C}, \eqref{eq:gamma}
and \eqref{eq:t}.

\begin{theorem} \label{theo:ODEsolutions}
There are six solutions $m^{(j)}$, $j =0, \ldots, 5$ of \eqref{m:ODE} whose
first and second components are given below. In all cases the third and 
fourth components are as in \eqref{eq:m3}--\eqref{eq:m4}.

The six solutions are given as follows. 
\begin{description}
\item[Solution $m^{(0)}$]
Let $F_0(z) = \Ai\left(r_2^{2/3} z + \frac{2s_2}{r_2^{1/3}} \right) e^{-r_2^2 \mu z}$.
Then
\begin{align} \label{m(0)}
	\begin{pmatrix}  m_1^{(0)}(z) \\ m_2^{(0)}(z) \end{pmatrix}  
	= \sqrt{2\pi} r_2^{1/6} e^{\lambda z} 
 \begin{pmatrix} - \frac{\sqrt{r_2}}{\gamma \sqrt{r_1}} \int_t^{\infty} F_0(z + C(x-t)) Q_t(x) dx \\
	F_0(z) + \int_t^{\infty} F_0(z + C(x-t))  R_t(x,t) dx \end{pmatrix}.
	\end{align}
\item[Solution $m^{(1)}$]
Let $F_1(z) = \omega \Ai\left(\omega \left(r_1^{2/3} z + \frac{2s_1}{r_1^{1/3}} \right) \right) e^{-r_1^2 \mu z}$.
Then
\begin{align} \label{m(1)}
	\begin{pmatrix}  m_1^{(1)}(z) \\ m_2^{(1)}(z) \end{pmatrix}  
	= -\sqrt{2\pi} r_1^{1/6} e^{\lambda z} 
 \begin{pmatrix} 	F_1(-z) + \int_t^{\infty \omega^2} F_1(-z + C(x-t))  R_t(x,t) dx \\
- \gamma \frac{\sqrt{r_1}}{\sqrt{r_2}} \int_t^{\infty \omega^2} F_1(-z + C(x-t)) Q_t(x) dx  \end{pmatrix}.
	\end{align}
\item[Solution $m^{(2)}$]	
Let $F_2(z) = \omega^2 \Ai\left(\omega^2 \left(r_2^{2/3} z + \frac{2s_2}{r_2^{1/3}}\right) \right) e^{-r_2^2 \mu z}$.
Then 
\begin{align} \label{m(2)} 
	\begin{pmatrix}  m_1^{(2)}(z) \\ m_2^{(2)}(z) \end{pmatrix}  
	 = -\sqrt{2\pi} r_2^{1/6} e^{\lambda z} 
			\begin{pmatrix} - \frac{\sqrt{r_2}}{\gamma \sqrt{r_1}} \int_t^{\infty \omega} F_2(z + C(x-t)) Q_t(x) dx \\
	F_2(z) + \int_t^{\infty\omega} F_2(z + C(x-t))  R_t(x,t) dx \end{pmatrix}.
	\end{align}
\item[Solution $m^{(3)}$]
Let $F_3(z) = \Ai\left(r_1^{2/3} z + \frac{2s_1}{r_1^{1/3}} \right) e^{-r_1^2 \mu z}$. Then
\begin{align} \label{m(3)}
	\begin{pmatrix}  m_1^{(3)}(z) \\ m_2^{(3)}(z) \end{pmatrix}  
	 = \sqrt{2\pi} r_1^{1/6} e^{\lambda z} 
 \begin{pmatrix} 	F_3(-z) + \int_t^{\infty} F_3(-z + C(x-t))  R_t(x,t) dx \\
- \gamma \frac{\sqrt{r_1}}{\sqrt{r_2}} \int_t^{\infty} F_3(-z + C(x-t)) Q_t(x) dx  \end{pmatrix}.
	\end{align}
\item[Solution $m^{(4)}$] 
Let $F_4(z) = \omega \Ai\left(\omega \left(r_2^{2/3} z + \frac{2s_2}{r_2^{1/3}}\right) \right) e^{-r_2^2 \mu z}$.
Then
\begin{align} \label{m(4)}
	\begin{pmatrix}  m_1^{(4)}(z) \\ m_2^{(4)}(z) \end{pmatrix}  
	= \sqrt{2\pi} r_2^{1/6} e^{\lambda z} 
 \begin{pmatrix} - \frac{\sqrt{r_2}}{\gamma \sqrt{r_1}} \int_t^{\infty \omega^2} F_4(z + C(x-t)) Q_t(x) dx \\
	F_4(z) + \int_t^{\infty\omega^2} F_4(z + C(x-t))  R_t(x,t) dx \end{pmatrix}.
	\end{align}
\item[Solution $m^{(5)}$]
Let  $F_5(z) = \omega^2 \Ai\left(\omega^2 \left(r_1^{2/3} z + \frac{2s_1}{r_1^{1/3}} \right) \right) e^{-r_1^2 \mu z}$.
Then
\begin{align} \label{m(5)}
	\begin{pmatrix}  m_1^{(5)}(z) \\ m_2^{(5)}(z) \end{pmatrix}  
	= \sqrt{2\pi} r_1^{1/6} e^{\lambda z} 
 \begin{pmatrix} 	F_5(-z) + \int_t^{\infty \omega} F_5(-z + C(x-t))  R_t(x,t) dx \\
- \gamma \frac{\sqrt{r_1}}{\sqrt{r_2}} \int_t^{\infty \omega} F_5(-z + C(x-t)) Q_t(x) dx  \end{pmatrix}.
	\end{align}
\end{description}
\end{theorem}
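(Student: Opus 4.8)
The plan is to verify \eqref{m:ODE} for each of the six candidates directly, after a normalising gauge transformation that reduces everything to Lemma \ref{lem:psisolutions}; the crux is an integration-by-parts identity for the Tracy--Widom integrals. Begin with a reduction: since $m_3,m_4$ are \emph{defined} by \eqref{eq:m3}--\eqref{eq:m4}, the first two rows of $\partial_z m = Um$ hold identically for every analytic pair $(m_1,m_2)$, so \eqref{m:ODE} is equivalent to its third and fourth rows, which after substituting \eqref{eq:m3}--\eqref{eq:m4} become a closed system of two second-order linear ODEs for $(m_1,m_2)$. No linear independence is asserted and none is needed: a six-element list spanning a four-dimensional solution space is exactly what a six-ray RH problem naturally produces, and any linear relations among the $m^{(j)}$ are harmless.

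Next I would conjugate \eqref{m:ODE} by the constant matrix in \eqref{eq:asymptoticM}, strip off the scalar factors $\sqrt{2\pi}\,r_j^{1/6}e^{\lambda z}$, and rescale $z$; this turns $U$ into a normalised coefficient matrix in which $r_1,r_2,s_1,s_2,\tau$ enter only through $C,\gamma,\mu,t$ -- the passage from \eqref{m:ODE} to \eqref{psi:ODE} -- and carries the $m^{(j)}$ to six solutions of \eqref{psi:ODE}. Everything then reduces to Lemma \ref{lem:psisolutions}, whose proof proceeds as follows. Take a generic candidate component, schematically $G(z) = F(\pm z) + \int_t^{\infty\omega^k} F(\pm z + C(x-t))\,R_t(x,t)\,dx$ (and the analogous integral against $Q_t$), differentiate under the integral sign in $z$, convert each $z$-derivative of $F$ into an $x$-derivative via $\tfrac{d}{dx}F(\pm z + C(x-t)) = \pm C\,F'(\pm z + C(x-t))$, and integrate by parts. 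Two inputs close the computation: the second-order linear ODE satisfied by $F$ (a shifted Airy equation dressed by the exponential $e^{-r_j^2\mu z}$), so that repeated $z$-differentiation becomes multiplication by the affine argument; and the Tracy--Widom differential identities for $\partial_x Q_t$, $\partial_x P_t$, $\partial_x R_t(x,t)$, together with $Q_t(t) = q$, $R_t(t,t) = u$ from \eqref{QtRttt} and the accompanying Tracy--Widom relations expressing the remaining boundary data through the Hastings--McLeod functions $q,q',u$ (for instance $P_t(t) = q' + uq$). The boundary contributions at $x=t$ produced by the integrations by parts are precisely what injects $q,q',u$ into the coefficients; one checks that $\partial_z$ maps the relevant finite collection of integrals into itself (a further integral against $P_t$ enters and is handled by the same identities) and that the resulting matrix is the normalised coefficient matrix. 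Convergence of all the integrals and their $z$-derivatives -- hence also $z$-analyticity, by Morera's theorem -- follows from the Airy-type decay of $Q_t,P_t,R_t(\cdot,t)$ along the rays $[t,\infty\omega^k)$ from Lemma \ref{lem:IIKS} and \eqref{Rt:asymptotics} together with the decay of $F$, the contour rotations being matched to the rotation of the Airy argument so that the decaying factor dominates.

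The six cases cost only two computations. The triple $m^{(0)},m^{(2)},m^{(4)}$ has a common shape built from $r_2,s_2$ (with $+z$), the three members differing only by the rotation $\omega^k$ inside the Airy function and the matching rotation $[t,\infty)\mapsto[t,\infty\omega^k)$ of the contour; since $\Ai(\omega^k\cdot)$ again solves the Airy equation and the rotated contour still makes all integrals converge while leaving the boundary value at the fixed endpoint $x=t$ unchanged, the integration-by-parts computation is word for word the same, so one member suffices. The triple $m^{(1)},m^{(3)},m^{(5)}$ is the mirror built from $r_1,s_1$ (with $-z$), obtained from the first by the manifest symmetry of the whole construction under exchanging the roles of $\theta_1$ and $\theta_2$ -- that is, $r_1\leftrightarrow r_2$, $s_1\leftrightarrow s_2$ (which swaps $\gamma\leftrightarrow\gamma^{-1}$, negates $\lambda$, and fixes $C,\mu,t$), together with $z\mapsto -z$ and a signed permutation of rows and columns of \eqref{eq:U} -- so it needs no separate work. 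The consistency of \eqref{eq:m3}--\eqref{eq:m4} is then automatic from the opening reduction.

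I expect the genuine difficulty to be computational rather than conceptual: getting the constants to land exactly. The third and fourth rows of $U$ carry the elaborate entries $r_1z - 2s_1 + s_1^4/r_1 - 2s_1^2u/(r_1C) + (u^2-q^2)/(r_1C^2)$ and $\sqrt{r_1r_2}\,C(q'+uq) - \frac{r_1^2s_2^2 + r_2^2 s_1^2}{(r_1r_2)^{3/2}}\frac{q}{C}$, each with its $i$- and $\gamma^{\pm1}$-prefactor, and all of this must emerge from the boundary terms of the integrations by parts once $C,\gamma,\lambda,\mu,t$ are substituted and the branch cuts of the fractional powers in $\theta_1,\theta_2$ and the orientations of the rotated contours are tracked consistently. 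This bookkeeping is what is relegated to section \ref{sec:appendix}.
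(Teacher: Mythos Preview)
Your proposal is correct and follows essentially the same route as the paper: reduce the $4\times4$ first-order system to the second-order $2\times2$ system \eqref{psi:ODE} via Lemma~\ref{lem:mandpsi}, verify Lemma~\ref{lem:psisolutions} by differentiation, the shifted Airy ODE for $F$, integration by parts, and the Tracy--Widom identities, then handle $m^{(0)},m^{(2)},m^{(4)}$ in one stroke (same computation with rotated Airy and rotated contour) and obtain $m^{(1)},m^{(3)},m^{(5)}$ from the $r_1\leftrightarrow r_2$, $s_1\leftrightarrow s_2$, $z\mapsto -z$ symmetry. One small inaccuracy: the gauge transformation is not conjugation by the constant $4\times4$ matrix in \eqref{eq:asymptoticM} and there is no rescaling of $z$; it is the diagonal map $\psi(z)=e^{-\lambda z}\operatorname{diag}(\gamma\sqrt{r_1/r_2},1)\,(m_1,m_2)^T$ of Lemma~\ref{lem:mandpsi}, which you in fact cite correctly a moment later.
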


The proof of Theorem~\ref{theo:ODEsolutions}  is given in section \ref{proof:ODEsolutions}.

\begin{remark}
The solutions $m^{(0)}$ and $m^{(3)}$ were found by Delvaux \cite{Del2}. These are
the solutions for which the integrals are taken over the real interval $(t, \infty)$. 
The other solutions are new and their identification is the main result of this paper.
\end{remark}

\begin{remark}
The integrals in \eqref{m(2)} and \eqref{m(5)} start at $t \in \mathbb R$ and
end at infinity at asymptotic angle $2\pi/3$. Similary, the integrals in \eqref{m(1)} 
and \eqref{m(4)} end at asymptotic angle $-2 \pi/3$.

By Lemma \ref{lem:IIKS} we have that $Q_t(x)$ and $R_t(x,t)$ are entire functions in $x$.
To see that the integrals in \eqref{m(2)} indeed converge, we note
that both $Q_t(x) = O(e^{- \frac{2}{3} x^{3/2}})$ and $R_t(x,t) = O(e^{- \frac{2}{3} x^{3/2}})$ 
by \eqref{Qt:asymptotics} and \eqref{Rt:asymptotics}, and so
\[ F_2(z + C(x-t)) 
	=  e^{\frac{2}{3} (r_2 C^{3/2} x^{3/2})  + O(x)} \]
	as $x \to \infty$ with $\arg x = 2\pi/3$.
Since by \eqref{eq:C}
\[ r_2 C^{3/2} = r_2 (r_1^{-2} + r_2^{-2})^{1/2} > 1, \] 
the integrands  decay at an exponential rate at infinity,
and the integrals in \eqref{m(2)} converge.

Similarly, the integrals in \eqref{m(1)}, \eqref{m(4)} and \eqref{m(5)} converge.
\end{remark}

\subsection{The solution of the tacnode RH problem}

The vector $m^{(j)}$, $j=0, \ldots, 5$ turns out to be the recessive solution of \eqref{m:ODE}
in the sector
\begin{equation} \label{eq:Sj} 
	S_j = \{ z \in \mathbb C \mid   - \tfrac{\pi}{6} + j \tfrac{\pi}{3} < \arg z < \tfrac{\pi}{6} + j \tfrac{\pi}{3} \}.
\end{equation}
Note that $S_j$ is the sector of angular width $\pi/3$ with $\Gamma_j$ as its bisector, see \eqref{eq:GammaM}.
The constant prefactors in the definitions \eqref{m(0)}--\eqref{m(5)} are chosen such that $m^{(j)}$ appears as one of 
the columns of $M$ in the two sectors $\Omega_{j-1}$ and $\Omega_j$ that intersect $S_j$, where it is
understood that $\Omega_{-1} = \Omega_5$. 
As such the vectors $m^{(j)}$ are the building blocks for the solution
of the tacnode RH problem.

Our main result is the following.

\begin{theorem} \label{theo:RHsolution}
The solution of the tacnode RH problem \eqref{eq:jumpsM}--\eqref{eq:asymptoticM} is given by
\[ M = \begin{pmatrix} m^{(3)} & m^{(0)} & m^{(1)} & m^{(2)} \end{pmatrix} \]
in the sector $\pi/3 < \arg z < 2 \pi/3$ around the positive imaginary axis.
In the other sectors it can be found by following the jumps \eqref{eq:jumpsM} 
in the RH problem, and by using the non-trivial relations
\begin{equation} \label{m:consistent}
 m^{(0)} + m^{(3)} = m^{(1)} - m^{(5)} = m^{(2)} - m^{(4)}
\end{equation}
among the six solutions $m^{(0)}, \ldots, m^{(5)}$ of \eqref{m:ODE}.

Explicit expressions for $M$ in all sectors $\Omega_j$ are given  in Figure~\ref{fig:solutionM}.
\end{theorem}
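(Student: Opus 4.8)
The plan is to verify that the matrix-valued function $M=\begin{pmatrix} m^{(3)} & m^{(0)} & m^{(1)} & m^{(2)}\end{pmatrix}$ built from the vectors of Theorem~\ref{theo:ODEsolutions}, extended to all sectors by the prescribed jumps \eqref{eq:jumpsM}, is well-defined, analytic off $\Gamma_M$, and satisfies the asymptotic condition \eqref{eq:asymptoticM}; uniqueness then comes from Proposition~\ref{prop:existence}. First I would check that the definition in the sector $\pi/3<\arg z<2\pi/3$ (call it $\Omega_1$) is consistent: each $m^{(j)}$ solves \eqref{m:ODE}, so $\partial_z M = UM$ holds there, and by Proposition~\ref{prop:solutionU} this is exactly the ODE the true solution obeys. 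I would then propagate $M$ into the neighbouring sectors using the jump matrices $J_k$ of Figure~\ref{fig:tacnodejumps}, and the crucial point is to show that after going all the way around the origin one returns to the original $M$ — equivalently that $J_0J_1\cdots J_5 = I$ when the $J_k$ are read with the right conjugations, and that the column vectors appearing in each sector are consistent with the linear relations \eqref{m:consistent}. So the second step is to establish \eqref{m:consistent}: the two nontrivial relations $m^{(0)}+m^{(3)}=m^{(1)}-m^{(5)}=m^{(2)}-m^{(4)}$. These should follow from the classical connection formula for the Airy function, $\Ai(z)+\omega\Ai(\omega z)+\omega^2\Ai(\omega^2 z)=0$, applied inside the integral representations \eqref{m(0)}--\eqref{m(5)}, together with a contour-deformation argument identifying integrals over $(t,\infty)$, $(t,\infty\omega)$ and $(t,\infty\omega^2)$ of the entire integrands $F_\ast(z+C(x-t))Q_t(x)$ and $F_\ast(z+C(x-t))R_t(x,t)$; here the decay estimates in Lemma~\ref{lem:IIKS} and \eqref{Rt:asymptotics} (and the fact $r_jC^{3/2}>1$ noted in the remark) justify closing/rotating contours.

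Once \eqref{m:consistent} is in hand, one can read off, sector by sector, which four of the six vectors $m^{(j)}$ (possibly with sign changes dictated by the $J_k$) form the columns of $M$ in $\Omega_j$; this is precisely the bookkeeping summarised in Figure~\ref{fig:solutionM}, and checking that the columns jump correctly across each ray $\Gamma_k$ reduces, after using \eqref{m:consistent}, to comparing specific entries of the explicit $J_k$ with the sign/permutation pattern of the $m^{(j)}$'s. The next step is the asymptotic condition \eqref{eq:asymptoticM}. For this I would use the uniform Airy asymptotics \eqref{Airyasympt} together with Lemma~\ref{lem:IIKS} to find the large-$z$ behaviour of each $m^{(j)}$: the leading term of, say, $m^{(0)}$ comes from $F_0(z)=\Ai(r_2^{2/3}z+\cdots)e^{-r_2^2\mu z}$ plus an integral that is lower order (by the decay of $Q_t,R_t$ and a Laplace/saddle estimate on $F_0(z+C(x-t))$ for $z\to\infty$ in the appropriate sector), so that $m^{(j)}$ is the recessive solution of \eqref{m:ODE} in $S_j$. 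One then has to match the resulting exponential factors $e^{\pm\theta_1+\tau z}$, $e^{\pm\theta_2-\tau z}$ and the algebraic prefactors $(-z)^{\mp1/4}$, $z^{\mp1/4}$; the substitutions \eqref{eq:lambdamu} for $\lambda,\mu$, together with \eqref{eq:C}, \eqref{eq:gamma}, \eqref{eq:t}, are engineered so that $\frac{2}{3}r_2(r_2^{2/3}z)^{3/2}$-type terms combine with $\lambda z$ and $-r_2^2\mu z$ to reproduce $\theta_2(z)+\tau z$ and its companions, and the $\sqrt{2\pi}\,r_j^{1/6}$ prefactors produce the $\tfrac{1}{\sqrt2}$ constant matrix in \eqref{eq:asymptoticM}. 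This is mostly a (lengthy but routine) computation; I would present it schematically and defer the detailed algebra, as the paper itself does, to the section on proofs and the appendix.

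The step I expect to be the main obstacle is the rigorous contour-deformation justification underlying both \eqref{m:consistent} and the asymptotics: the integrands are entire in $x$ but grow like $e^{+\frac{2}{3}r_jC^{3/2}x^{3/2}}$ along the rotated rays before the decay of $Q_t,R_t$ kicks in, so one must be careful about which rotations of the contour are permissible (only those for which $\re\big((r_jC^{3/2})x^{3/2}\big)-\re\big(x^{3/2}\big)\to-\infty$), and about uniformity in $z$ when $z$ itself is sent to infinity inside a sector. Getting these estimates clean — essentially a two-variable steepest-descent bookkeeping on $F_\ast(z+C(x-t))$ — is where the real work lies; the algebraic identities for $U$, $\gamma$, $t$ and the jump matrices, while tedious, are deterministic once the analytic framework is set up, and can be relegated to the appendix as the paper indicates. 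Finally, having produced a solution with the correct jumps and asymptotics, I invoke Proposition~\ref{prop:existence} to conclude it is \emph{the} solution, completing the proof of Theorem~\ref{theo:RHsolution}.
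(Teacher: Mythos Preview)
Your overall architecture is reasonable, but there is a genuine gap in the step you identify as the core of the argument: the direct proof of \eqref{m:consistent} from the integral representations via the Airy connection formula and contour rotation. The paper itself flags this as an \emph{open problem} (see the paragraph immediately following the statement of Theorem~\ref{theo:RHsolution}). The difficulty is that the Airy identity $\Ai(z)+\omega\Ai(\omega z)+\omega^2\Ai(\omega^2 z)=0$ only relates $F_0,F_2,F_4$ among themselves (all built from $r_2$) and $F_1,F_3,F_5$ among themselves (all built from $r_1$), whereas each equality in \eqref{m:consistent} mixes one vector from the $r_1$-family with one from the $r_2$-family, with different prefactors $\sqrt{2\pi}\,r_j^{1/6}$, different arguments $\pm z$, and different placements of the $Q_t$ and $R_t$ integrals in the two components. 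No amount of contour rotation for a \emph{fixed} integrand will turn an $r_2$-based integral into an $r_1$-based one, so the mechanism you sketch does not obviously produce \eqref{m:consistent}.

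The paper circumvents this entirely by reversing the logic. It starts from the \emph{existence} of the solution $M$ (Proposition~\ref{prop:existence}), notes that each column of $M$ must be a recessive solution of \eqref{m:ODE} in some sector $S_j$, and then proves by an asymptotic computation (essentially the one you describe for the asymptotics step) that the explicit vector $m^{(j)}$ has exactly the recessive behaviour in $S_j$ prescribed by \eqref{eq:asymptoticM}. Since recessive solutions are unique up to a scalar, and the normalising constants in \eqref{m(0)}--\eqref{m(5)} are chosen to match, this \emph{identifies} each $m^{(j)}$ with a specific column of $M$ in the two sectors $\Omega_{j-1},\Omega_j$. The relations \eqref{m:consistent} then \emph{follow} a posteriori from the jump matrices $J_k$, rather than being an input to the construction. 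In short: you try to build $M$ from the $m^{(j)}$ and need \eqref{m:consistent} as a lemma; the paper recognises the $m^{(j)}$ inside the already-existing $M$ and obtains \eqref{m:consistent} as a corollary.
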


\begin{figure}[t]
\begin{center}
   \setlength{\unitlength}{1truemm}
   \begin{picture}(100,70)(-5,2)
       \put(40,40){\line(1,0){40}}
       \put(40,40){\line(-1,0){40}}
       \put(40,40){\line(2,3){15}}
       \put(40,40){\line(2,-3){15}}
       \put(40,40){\line(-2,3){15}}
       \put(40,40){\line(-2,-3){15}}
       \put(40,40){\thicklines\circle*{1}}
       \put(39.3,36){$0$}
       \put(60,40){\thicklines\vector(1,0){.0001}}
       \put(20,40){\thicklines\vector(-1,0){.0001}}
       \put(50,55){\thicklines\vector(2,3){.0001}}
       \put(50,25){\thicklines\vector(2,-3){.0001}}
       \put(30,55){\thicklines\vector(-2,3){.0001}}
       \put(30,25){\thicklines\vector(-2,-3){.0001}}

       \put(52,47){$\small{\begin{pmatrix} -m^{(5)} & m^{(0)} & m^{(1)} & m^{(2)} - m^{(1)} \end{pmatrix}}$}
       \put(21,64){$\small{\begin{pmatrix} m^{(3)} & m^{(0)} & m^{(1)} & m^{(2)} \end{pmatrix}}$}
			
       \put(-28,47){$\small{\begin{pmatrix} m^{(3)} & -m^{(4)} & m^{(1)} - m^{(2)} & m^{(2)} \end{pmatrix}}$}
       \put(-27,29){$\small{\begin{pmatrix} m^{(3)} & m^{(2)} & m^{(5)} - m^{(4)}  & m^{(4)} \end{pmatrix}}$}
			
       \put(21,14){$\small{\begin{pmatrix} m^{(3)} & m^{(0)} & m^{(5)} & m^{(4)} \end{pmatrix}}$}
       \put(51,29){$\small{\begin{pmatrix} m^{(1)} & m^{(0)} & m^{(5)} & m^{(4)} - m^{(5)} \end{pmatrix}}$}			  
			
			\put(62,56){$\Omega_0$}
			\put(39,54){$\Omega_1$}
			\put(16,56){$\Omega_2$}
			\put(16,22){$\Omega_3$}
			\put(39,24){$\Omega_4$}
			\put(62,22){$\Omega_5$}
  \end{picture}
	\caption{Structure of the solution of the tacnode RH problem. The column vector 
	$m^{(j)}$ is the recessive solution of \eqref{m:ODE} in
	the sector $S_j$.  \label{fig:solutionM}}
\end{center}
\end{figure}
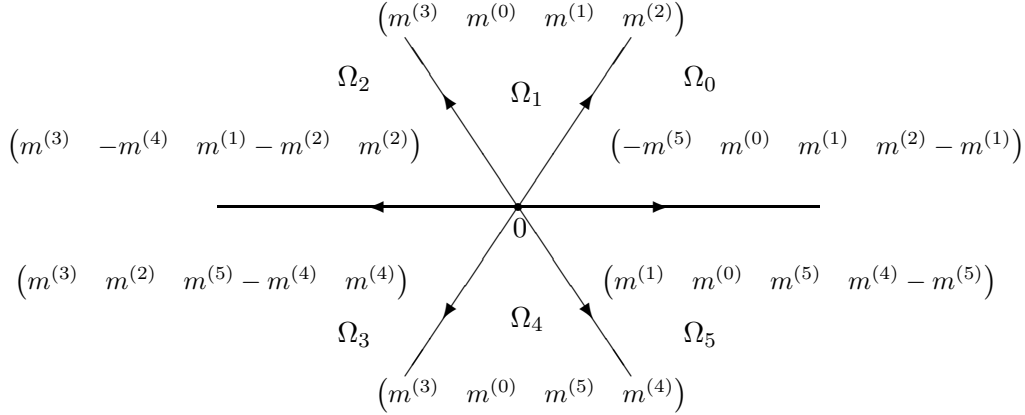
The proof of Theorem \ref{theo:RHsolution} is given in section \ref{proof:RHsolution}.

The relations \eqref{m:consistent} among the recessive solutions of \eqref{m:ODE}
are quite remarkable as they do not follow in a straightforward way from the integral
representations. It is an open problem how to prove these
identities in a direct way from the formulas \eqref{m(0)}--\eqref{m(5)}.

\subsection{Comparison with the $2 \times 2$ RH problem for Painlev\'e II} \label{comparison}

\begin{figure}[t]
\vspace{14mm}
\begin{center}
   \setlength{\unitlength}{1truemm}
   \begin{picture}(100,70)(-5,2)
       \put(40,40){\line(0,1){25}}
       \put(40,40){\line(0,-1){25}}
       \put(40,40){\line(3,2){20}}
       \put(40,40){\line(-3,2){20}}
       \put(40,40){\line(3,-2){20}}
       \put(40,40){\line(-3,-2){20}}
       \put(40,40){\thicklines\circle*{1}}
       \put(38.3,36){$0$}
       \put(40,60){\thicklines\vector(0,1){.0001}}
       \put(40,20){\thicklines\vector(0,-1){.0001}}
    %   \put(60,50){\thicklines\vector(2,1){.0001}}
     %  \put(60,30){\thicklines\vector(2,-1){.0001}}
     %  \put(20,50){\thicklines\vector(-2,1){.0001}}
     %  \put(20,30){\thicklines\vector(-2,-1){.0001}}
       \put(55,50){\thicklines\vector(3,2){.0001}}
       \put(25,50){\thicklines\vector(-3,2){.0001}}
       \put(55,30){\thicklines\vector(3,-2){.0001}}
       \put(25,30){\thicklines\vector(-3,-2){.0001}}

       \put(60,55){$\small{\begin{pmatrix} 1 & 0 \\ s_1 & 1 \end{pmatrix}}$}
       \put(33,70){$\small{\begin{pmatrix} 1 & s_2 \\ 0 & 1 \end{pmatrix}}$}
       \put(6,55){$\small{\begin{pmatrix} 1 & 0 \\ s_3 & 1 \end{pmatrix}}$}
       \put(6,23){$\small{\begin{pmatrix} 1& s_1 \\ 0 & 1 \end{pmatrix}}$}
       \put(33,10){$\small{\begin{pmatrix} 1 & 0 \\ s_2 & 1 \end{pmatrix}}$}
       \put(60,23){$\small{\begin{pmatrix} 1 & s_3 \\ 0 & 1 \end{pmatrix}}$}
  \end{picture}
  \caption{Jump matrices in the $2 \times 2$ RH problem for Painlev\'e II. The Stokes multipliers satisfy 
	$s_1 s_2 s_3 + s_1 + s_2 + s_3 = 0$. The Hastings McLeod solution corresponds to $s_1 = 1$, $s_2 = 0$ and $s_3 = -1$. 
	\label{fig:2x2RHproblem}}
\end{center}
\end{figure}
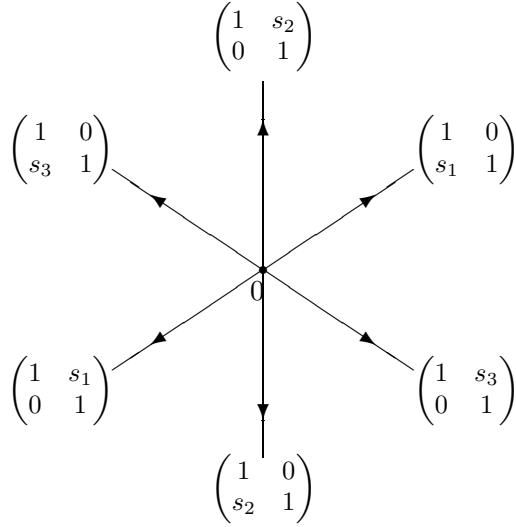

The usual RH problem for Painlev\'e II is of size $2 \times 2$ with a
contour $\Gamma_{\Psi}$ as in Figure \ref{fig:2x2RHproblem}. Then
$\Psi : \mathbb C \setminus \Gamma_{\Psi} \to \mathbb C^{2\times 2}$
is analytic satisfying $\Psi_+ = \Psi_- J_{\Psi}$ on $\Gamma_{\Psi}$ 
with jump matrices $J_{\Psi}$ that are also shown in Figure \ref{fig:2x2RHproblem},
and with the asymptotic condition
\[ \Psi(z) = (I+ O(1/z)) e^{-i (\tfrac{4}{3} z^3 + t z) \sigma_3}, \quad \text{as } z \to \infty \]
where $\sigma_3 = \begin{pmatrix} 1 & 0 \\ 0 & -1 \end{pmatrix}$.
The Stokes multipliers $s_1$, $s_2$, $s_3$ that appear in the jump matrices are
assumed to satisfy $s_1 s_2 s_3 + s_1 + s_2 + s_3 = 0$. Then a unique solution
to the RH problem exists (except for an at most countable number of $t$ values),
and $\Psi$ satisfies the differential equations (Lax pair)
\begin{align} \label{eq:FNLaxpair} 
	\frac{\partial}{\partial z} \Psi & = 
	\begin{pmatrix} - 4i z^2 - i(t + 2q^2) & 4zq + 2i q' \\ 4zq - 2iq' &  4iz^2 + i(t+2q^2) \end{pmatrix} \Psi \\
	\frac{\partial}{\partial t} \Psi & = 
	\begin{pmatrix} - i z & q \\ q  & iz \end{pmatrix} \Psi \nonumber
	\end{align}
	where $q = q(t)$ is a solution of the Painlev\'e II equation, determined by the Stokes
	multipliers, see e.g.\ \cite{FN,FIKN}.

The special case $s_1 = 1$, $s_2 = 0$, $s_3 = -1$ leads to the Hastings McLeod solution
of the Painlev\'e II equation. In this case there is an explicit formula for the solution
of the RH problem, which is contained in the recent work of Baik, Liechty, and Schehr \cite{BLS},
see also \cite{Bai}.
The solution is built out of the two column vectors
\begin{align} \label{psi1} 
	\psi^{(1)}(z) & = e^{-i(\tfrac{4}{3} z^3 + tz)} \begin{pmatrix} 1 + \int_t^{\infty} e^{-2i(x-t) z} R_t(x,t) dx \\
		-\int_t^{\infty} e^{-2i(x-t)z} Q_t(x) dx \end{pmatrix}, \\
	\psi^{(2)}(z) & = e^{i(\tfrac{4}{3} z^3 + tz)} \begin{pmatrix} -\int_t^{\infty} e^{2i(x-t)z} Q_t(x) dx \\
	1 + \int_t^{\infty} e^{2i(x-t) z} R_t(x,t) dx \end{pmatrix},  \label{psi2}
	\end{align}
that satisfy the vector analogue of \eqref{eq:FNLaxpair}.
Then the solution of the RH problem is
\[ \Psi(z) = \begin{pmatrix} \psi^{(1)} & \psi^{(2)} \end{pmatrix} \]
in the two sectors $-\frac{\pi}{6} < \arg z < \frac{\pi}{6}$ and 
$\frac{5\pi}{6} < \arg z < \frac{7\pi}{6}$. The solution in all sectors is
given in Figure~\ref{fig:2x2HastingsMcLeod}.

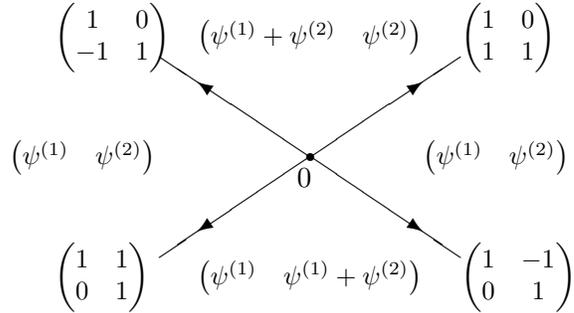
\begin{figure}[t]
\vspace{14mm}
\begin{center}
   \setlength{\unitlength}{1truemm}
   \begin{picture}(100,70)(-5,2)
%       \put(40,40){\line(0,1){25}}
%       \put(40,40){\line(0,-1){25}}
       \put(40,40){\line(3,2){20}}
       \put(40,40){\line(-3,2){20}}
       \put(40,40){\line(3,-2){20}}
       \put(40,40){\line(-3,-2){20}}
       \put(40,40){\thicklines\circle*{1}}
       \put(38.3,36){$0$}
%       \put(40,60){\thicklines\vector(0,1){.0001}}
%      \put(40,20){\thicklines\vector(0,-1){.0001}}
    %   \put(60,50){\thicklines\vector(2,1){.0001}}
     %  \put(60,30){\thicklines\vector(2,-1){.0001}}
     %  \put(20,50){\thicklines\vector(-2,1){.0001}}
     %  \put(20,30){\thicklines\vector(-2,-1){.0001}}
       \put(55,50){\thicklines\vector(3,2){.0001}}
       \put(25,50){\thicklines\vector(-3,2){.0001}}
       \put(55,30){\thicklines\vector(3,-2){.0001}}
       \put(25,30){\thicklines\vector(-3,-2){.0001}}

       \put(60,55){$\small{\begin{pmatrix} 1 & 0 \\ 1 & 1 \end{pmatrix}}$}
 %      \put(33,70){$\small{\begin{pmatrix} 1 & s_2 \\ 0 & 1 \end{pmatrix}}$}
       \put(6,55){$\small{\begin{pmatrix} 1 & 0 \\ -1 & 1 \end{pmatrix}}$}
       \put(6,23){$\small{\begin{pmatrix} 1 & 1 \\ 0 & 1 \end{pmatrix}}$}
  %     \put(33,10){$\small{\begin{pmatrix} 1 & 0 \\ s_2 & 1 \end{pmatrix}}$}
       \put(60,23){$\small{\begin{pmatrix} 1 & -1 \\ 0 & 1 \end{pmatrix}}$}
			
			\put(55,39){$\small \begin{pmatrix} \psi^{(1)} & \psi^{(2)} \end{pmatrix}$}
			\put(0,39){$\small \begin{pmatrix} \psi^{(1)} & \psi^{(2)} \end{pmatrix}$}
			\put(25,55){$\small \begin{pmatrix} \psi^{(1)} + \psi^{(2)} & \psi^{(2)} \end{pmatrix}$}
			\put(25,23){$\small \begin{pmatrix} \psi^{(1)} & \psi^{(1)} + \psi^{(2)} \end{pmatrix}$}
  \end{picture}
  \caption{Solution of the $2 \times 2$ RH problem for the Hastings-McLeod solution of Painlev\'e II
	in terms of the columnn vectors $\psi^{(1)}$ and $\psi^{(2)}$.
	\label{fig:2x2HastingsMcLeod}}
\end{center}
\end{figure}

Comparing this solution with the explicit solution $M$ of the $4 \times 4$ matrix-valued RH problem,
we see that the same functions $Q_t(x)$ and $R_t(x,t)$ appear in the solutions, but there does
not seem to be a direct way to go from $\Psi$ to $M$. The $4 \times 4$ RH problem
and its associated Lax pair therefore provide a genuinely different characterization of the
Hastings-McLeod solution of Painlev\'e II.

\section{Proofs of Theorems \ref{theo:ODEsolutions} and \ref{theo:RHsolution}} \label{sec:proofs}
\subsection{Transformation to second order system}
	
It will be convenient to transform the first order system \eqref{m:ODE} 
to a second order system. The transformation also removes the parameter $\gamma$.
	
\begin{lemma} \label{lem:mandpsi}
Suppose $m = \begin{pmatrix} m_1 & m_2 & m_3 & m_4 \end{pmatrix}^T$ satisfies \eqref{m:ODE}.
Then
\begin{equation} \label{eq:psi}
	\psi(z) = \begin{pmatrix} \psi_1(z) \\ \psi_2(z) \end{pmatrix} 
		= e^{-\lambda z} \begin{pmatrix} \gamma \frac{\sqrt{r_1}}{\sqrt{r_2}} m_1(z) \\ m_2(z) \end{pmatrix}
		\end{equation}
satisfies
\begin{equation} \label{psi:ODE}
 \frac{\partial^2 \psi }{\partial z^2} 
	= A \frac{\partial \psi }{\partial z}  
	+ B \psi
\end{equation}
where 
\begin{align} \label{eq:A}
	A & = \begin{pmatrix} 2 r_1^2 \mu & C^2 r_1^2 q \\  - C^2 r_2^2 q & - 2 r_2^2 \mu \end{pmatrix}  \\
	B & = \begin{pmatrix} -r_1^2 z + 2r_1 s_1 + C r_1^2 q^2 - r_1^4 \mu^2  & 		- C r_1^2 q' \\ 
		- C r_2^2 q' 	& r_2^2 z + 2r_2 s_2 + C r_2^2  q^2 - r_2^4 \mu^2
		 \end{pmatrix}. \label{eq:B}
	\end{align}

Conversely, if $\psi = (\psi_1,\psi_2)^T$ satisfies \eqref{psi:ODE} with $A$ and $B$ given by \eqref{eq:A}-\eqref{eq:B}
then $m = (m_1, m_2, m_3,m_4)^T$ where
\begin{align} m_1(z) & = e^{\lambda z} \gamma^{-1} \frac{\sqrt{r_2}}{\sqrt{r_1}} \psi_1(z), \qquad
	m_2(z)  = e^{\lambda z} \psi_2(z)  \label{m1m2frompsi12}
	\end{align}
and $m_3$, $m_4$ are given by \eqref{eq:m3}--\eqref{eq:m4} satisfies \eqref{m:ODE}.
\end{lemma}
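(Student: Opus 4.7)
The plan is to use the block structure of $U$ to eliminate $m_3$ and $m_4$ and reduce \eqref{m:ODE} to a closed second-order system for $(m_1,m_2)$; then the change of variables \eqref{eq:psi} is a conjugation by a constant diagonal matrix together with a gauge factor $e^{-\lambda z}$, and one checks that it produces exactly the matrices $A,B$ in \eqref{eq:A}-\eqref{eq:B}.

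In more detail, I would first read off the four scalar equations $m_i' = \sum_j U_{ij} m_j$. The first row of $U$ has only the entry $ir_1$ in the $m_3$-slot among the last two components, and the second row has only $ir_2$ in the $m_4$-slot, so solving for $m_3$ and $m_4$ gives precisely \eqref{eq:m3}-\eqref{eq:m4}. This already establishes the first half of the converse statement. Next I would differentiate \eqref{eq:m3} and \eqref{eq:m4} once more in $z$, and on the other hand compute $m_3'$ and $m_4'$ from the third and fourth rows of $U m$; substituting \eqref{eq:m3}-\eqref{eq:m4} for $m_3,m_4$ in the latter expresses everything in terms of $m_1,m_1',m_2,m_2'$. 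Equating the two expressions yields a $2\times 2$ second-order system $m_{1,2}'' = P m_{1,2}' + Q m_{1,2}$ with coefficient matrices depending on $z$ and the parameters.

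Finally I would perform the substitution \eqref{eq:psi}. Since $\lambda$ is a constant and $\gamma\sqrt{r_1/r_2}$ is a constant diagonal factor, the map $m_{1,2}\mapsto \psi$ is $\psi = e^{-\lambda z} D\, m_{1,2}$ with $D = \diag(\gamma\sqrt{r_1/r_2},1)$, and so $\psi'' = (P - 2\lambda I)\psi' + (Q + \lambda P - \lambda^2 I)\psi$ after conjugation by $D$ (which affects only the off-diagonal entries by the scalar $\gamma\sqrt{r_1/r_2}$ and its inverse). The remaining task is the algebraic identification of this conjugated system with $A,B$ as given in \eqref{eq:A}-\eqref{eq:B}. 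The main obstacle, and the step where care is required, is the simplification of the diagonal entries of $B$: the terms $2s_1 - s_1^4/r_1 + 2s_1^2 u/(r_1 C) - (u^2-q^2)/(r_1 C^2)$ coming from $U_{31}$ combine with the products $U_{11}U_{13}=ir_1 U_{11}$, with derivatives $U_{11}',U_{22}'$, and with the $\lambda$-shift, and the collapse to $-r_1^2 z + 2r_1 s_1 + Cr_1^2 q^2 - r_1^4 \mu^2$ uses essentially the Painlev\'e~II identity $q''=tq+2q^3$, the relation $u'=-q^2$, the definition of $t$ in \eqref{eq:t}, and $C^3 = r_1^{-2}+r_2^{-2}$. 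The off-diagonal entries of $A$ and $B$ require the analogous identities together with the factor $\gamma\sqrt{r_1/r_2}$ coming from the conjugation by $D$, which is precisely why the gauge $\gamma$ drops out in the end. The converse direction is then immediate: define $m_1,m_2$ from $\psi_1,\psi_2$ by \eqref{m1m2frompsi12} and $m_3,m_4$ by \eqref{eq:m3}-\eqref{eq:m4}; the first two rows of $m'=Um$ hold by construction, and the third and fourth rows follow by reversing the computation above, using \eqref{psi:ODE} to replace $\psi_1'',\psi_2''$ and again the Painlev\'e identities to match coefficients.
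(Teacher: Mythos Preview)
Your strategy is exactly the paper's: eliminate $m_3,m_4$ via the first two rows of $U$, obtain a closed second-order system for $(m_1,m_2)$, and then apply the gauge $\psi=e^{-\lambda z}D\,(m_1,m_2)^T$ with $D=\diag(\gamma\sqrt{r_1/r_2},1)$. The paper records the outcome for \emph{general} $\lambda$ and then observes that the specific choice \eqref{eq:lambdamu} kills the extra $q$-terms in the off-diagonal entries of $B$ and turns $-(\tau\mp\lambda)^2$ into $-r_j^4\mu^2$ on the diagonal.

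One correction: the ``collapse'' you describe is purely algebraic and does \emph{not} use the Painlev\'e~II equation $q''=tq+2q^3$, nor $u'=-q^2$, nor the formula \eqref{eq:t} for $t$. The quantities $q,q',u$ are constants in $z$, so the derivatives $U_{11}',U_{22}'$ you mention are simply zero; and the diagonal simplification only needs $C^3=r_1^{-2}+r_2^{-2}$ together with $\tau-\lambda=r_1^2\mu$, $\tau+\lambda=r_2^2\mu$. For instance, the $(1,1)$ entry of $Q$ computes directly to $-r_1^2z+2r_1s_1-\tau^2+Cr_1^2q^2$ (the $s_1^4$, $s_1^2u/C$, and $u^2/C^2$ terms cancel between $ir_1U_{31}$ and $U_{33}U_{11}$, and the remaining $q^2$ terms combine via $1+r_1^2/r_2^2=C^3r_1^2$), after which the $\lambda$-shift gives $-r_1^4\mu^2$. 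Invoking Painlev\'e identities here would be a red flag that you haven't actually carried out the computation; once you do, you'll see they never enter.
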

\begin{proof}
This is a straightforward calculation and we will not give full details, see also \cite[Proposition 2.12]{Del2}.

Let us just note that for general $\lambda$ one obtains \eqref{psi:ODE} with
\begin{align*} 
	A = \begin{pmatrix} 2 \tau - 2 \lambda & C^2 r_1^2 q \\  - C^2 r_2^2 q & - 2\tau - 2\lambda \end{pmatrix},  
	\end{align*}
and
\begin{align*}
B = \begin{pmatrix} -r_1^2 z + 2r_1 s_1 + C r_1^2 q^2 - (\tau - \lambda)^2  & 
		- C r_1^2 q' + ((r_1^2 - r_2^2) \tau + (r_1^2 + r_2^2) \lambda)) \frac{q}{r_2^2 C} \\ 
		- C r_2^2 q' - ((r_1^2 - r_2^2) \tau + (r_1^2 + r_2^2) \lambda))	\frac{q}{r_1^2 C}	
		& r_2^2 z + 2r_2 s_2 + C r_2^2  q^2 - (\tau+\lambda)^2  
		\end{pmatrix}
		\end{align*}
Because of the choice \eqref{eq:lambdamu} for $\lambda$
we have that the off-diagonal entries of $B$ simplify
since the terms with $q$ disappear.
By \eqref{eq:lambdamu} it is easy to check that
\[ \tau - \lambda = r_1^2 \mu, \qquad \tau + \lambda = r_2^2 \mu \]
and we obtain \eqref{eq:A} and \eqref{eq:B}.
\end{proof}

\subsection{Solutions to the second order system \eqref{psi:ODE}}

We denote $\omega = e^{2\pi i/3}$ as before, and we let
\[ y_0(x) = \Ai(x), \qquad y_1(x) = \omega \Ai(\omega x), \qquad y_2(x) = \omega^2 \Ai(\omega^2 x) \]
be three solutions of the Airy differential equation $y'' = xy$.

The following lemma gives solutions to the second order system \eqref{psi:ODE}.

\begin{lemma} \label{lem:psisolutions} For $k=0,1,2$, we put 
\begin{equation} \label{Fz} 
	F(z) = y_k\left(r_2^{2/3}z + \frac{2s_2}{r_2^{1/3}}\right) \exp(- r_2^2 \mu z).
	\end{equation}
Then  the vector $\psi =  \begin{pmatrix} \psi_1 \\ \psi_2 \end{pmatrix}$ where
\begin{align} 
  \psi_1(z) & = - \int_t^{\infty \omega^{2k}} F(z + C(x-t)) Q_t(x) dx, \label{eq:psi1} \\
	\psi_2(z) & = F(z)  \label{eq:psi2}
	  + \int_t^{\infty \omega^{2k}} F(z + C(x-t))  R_t(x,t) dx, 
	\end{align}
is a solution of \eqref{psi:ODE}.
\end{lemma}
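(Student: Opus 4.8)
The plan is to verify directly that the pair $(\psi_1,\psi_2)$ defined by \eqref{eq:psi1}--\eqref{eq:psi2} solves the second order system \eqref{psi:ODE}, by computing $\psi_1',\psi_1'',\psi_2',\psi_2''$ and substituting. The key structural input is that $F$ in \eqref{Fz} satisfies a scalar second order ODE: since $y_k$ solves $y''=xy$, the chain rule gives
\begin{equation*}
F''(w) = \left( r_2^{4/3}\left( r_2^{2/3} w + \tfrac{2s_2}{r_2^{1/3}}\right) + 2 r_2^2 \mu \, \partial_w + r_2^4\mu^2 \right)\!\Big|_{\text{formally}} \ \text{— i.e.}\quad
F'' + 2 r_2^2\mu F' + \left(r_2^4\mu^2 - r_2^2 w - 2 r_2 s_2\right) F = 0,
\end{equation*}
which I will record cleanly as $F''(w) = \bigl(r_2^2 w + 2 r_2 s_2 - r_2^4\mu^2\bigr)F(w) - 2 r_2^2\mu F'(w)$. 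Note the coefficients here are exactly (up to the sign of $z$) the $(2,2)$-entry of $B$ minus the $q$-terms and the $(2,2)$-entry of $-A$; this is why the Airy ODE for $y_k$ is the right ingredient and why the argument shift $\tfrac{2s_2}{r_2^{1/3}}$ and the exponential factor $e^{-r_2^2\mu z}$ were chosen.

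Next I would differentiate under the integral sign. Writing $G(z,x) = F(z + C(x-t))$, we have $\partial_z G = F'(z+C(x-t))$ and $C^{-1}\partial_x G = F'(z+C(x-t))$, so $\partial_z G = C^{-1}\partial_x G$. This lets me trade $z$-derivatives of the integrand for $x$-derivatives, and then integrate by parts in $x$ over $[t,\infty\omega^{2k})$. The boundary term at the upper limit vanishes by the decay estimates \eqref{Qt:asymptotics}, \eqref{Rt:asymptotics} together with the growth of $F$ along $\arg x = 2\pi k/3$ established in the remark after Theorem \ref{theo:ODEsolutions} (the condition $r_2 C^{3/2}>1$); the boundary term at $x=t$ produces the inhomogeneous pieces. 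Crucially, the integration by parts will need the differential/recurrence identities satisfied by $Q_t$, $P_t$, $R_t$. From the IIKS description in Lemma \ref{lem:IIKS} (or directly from \cite{TW,TW2}) one has relations of Tracy--Widom type, schematically
\begin{align*}
Q_t'(x) &= P_t(x) - R_t(x,t)\,Q_t(t), \\
P_t'(x) &= x\,Q_t(x) - R_t(x,t)\,P_t(t), \\
\partial_x R_t(x,t) &= -Q_t(x) p(t) - P_t(x) q(t) \quad\text{(with } p(t)=P_t(t),\ q(t)=Q_t(t)\text{)},
\end{align*}
and I will need precise versions of these (this is the computational heart, relegated to section \ref{sec:appendix}). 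Substituting $F''$ via the scalar ODE above converts the $z$-dependent coefficient $r_2^2(z+C(x-t))$ into $r_2^2 z$ (matching the $(2,2)$-entry of $B$) plus a term $r_2^2 C(x-t)$ which, after one integration by parts using $\partial_x G = C\partial_z G$ applied the other way, is exactly what recombines with the $xQ_t(x)$ from $P_t'$ to close the system.

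The main obstacle is the bookkeeping in the second equation: showing that $\psi_2'' = (A\psi')_2 + (B\psi)_2$, i.e. that
\begin{equation*}
\psi_2'' = -C^2 r_2^2 q\,\psi_1' - 2 r_2^2\mu\,\psi_2' - C r_2^2 q'\,\psi_1 + \bigl(r_2^2 z + 2 r_2 s_2 + C r_2^2 q^2 - r_2^4\mu^2\bigr)\psi_2,
\end{equation*}
because here the terms $-C^2 r_2^2 q\,\psi_1'$, $-C r_2^2 q'\,\psi_1$, and $C r_2^2 q^2\,\psi_2$ must all be produced from the boundary contributions at $x=t$ and from the $R_t$-terms via the identities $Q_t(t)=q(t)$, $R_t(t,t)=u(t)$ from \eqref{QtRttt} and $u' = -q^2$; matching these three coefficients simultaneously is where the choice of the constant $C = (r_1^{-2}+r_2^{-2})^{1/3}$ gets used, via $r_1^{-2}+r_2^{-2} = C^{-3}$, so that factors of $C^3 r_1^2 r_2^2 \cdot(\cdots)$ collapse correctly. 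The equation for $\psi_1$ is analogous but one degree lower in apparent difficulty since $\psi_1$ has no inhomogeneous $F(z)$ term; the cross-terms with $r_1$ appear there, again via $r_1^{-2}+r_2^{-2}=C^{-3}$. I would carry out $k=0$ first (real interval, all boundary terms at $\infty$ clearly vanishing) and then observe that the computation is formally identical for $k=1,2$ since the Airy ODE $y_k''=xy_k$ holds for all three $y_k$ and the only change is the contour of integration, whose endpoint contribution still vanishes by the growth bound $r_2 C^{3/2}>1$.
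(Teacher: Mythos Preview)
Your plan is essentially the paper's own approach: verify \eqref{psi:ODE} directly by substituting the scalar ODE for $F$, integrating by parts in $x$, and invoking the Tracy--Widom differential identities, with the cases $k=1,2$ reducing to $k=0$ since only the Airy equation and the endpoint decay (via $r_2 C^{3/2}>1$) are used. Two small corrections will save you time when you carry it out. First, your expectation about relative difficulty is reversed: the $\psi_2$-equation closes after one integration by parts using $(x-t)R_t(x,t)=q^2 R_t(x,t)+q'Q_t(x)-qQ_t'(x)$, while the $\psi_1$-equation is the longer one; after two rounds of integration by parts you arrive at an expression that still contains $C^3 r_2^2\,\partial_z^2\psi_1$ on the right, and solving for $\partial_z^2\psi_1$ brings in the factor $(C^3 r_2^2-1)^{-1}=r_1^2/r_2^2$ (note $C^3=r_1^{-2}+r_2^{-2}$, not $C^{-3}$), which is precisely how all the $r_1$'s enter. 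Second, the Tracy--Widom identities you sketched are missing the $u(t)$ and $v(t)$ terms: the correct forms are $Q_t'(x)=P_t(x)+q(t)R_t(x,t)-u(t)Q_t(x)$ and $P_t'(x)=xQ_t(x)+p(t)R_t(x,t)+u(t)P_t(x)-2v(t)Q_t(x)$, and these extra terms are exactly what is needed for the boundary contributions at $x=t$ to cancel.
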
	

\begin{proof} 
For $k=0$, this is proved in \cite[section 5.2]{Del2}, although with somewhat
different notation.
The proof uses the differential equation for $F$ 
\begin{equation} \label{F:ode} 
	F''(z) = -2 r_2^2 \mu F'(z) + (r_2^2 z + 2 r_2 s_2 - r_2^4 \mu^2) F(z),
	\end{equation}
	see also \cite[Equ. (5.2)]{Del2} (where $F$ is called $b_z(x)$ and $\mu$ is called $\tau$),
and the fact that the integrals in \eqref{eq:psi1} and \eqref{eq:psi2} converge for $k=0$. 
We already noted, see Remark 1.6, that the integrals in \eqref{eq:psi1} and \eqref{eq:psi2}
converge for $k=1,2$ as well.

What is also used in \cite{Del2} are a number of identities for the functions 
$Q_t$, $P_t$ and $R_t$ introduced in \eqref{Qt}-\eqref{Rt}, namely 
the differential identities (see also \cite{TW,TW2} or \cite[section 3.8]{AGZ})
\begin{align} \label{Rtdt}
		\frac{\partial}{\partial t} R_t(x,y) & = - R_t(x,t) R_t(t,y), \\
		\label{Qtdt}
		\frac{\partial}{\partial t} Q_t(x) & = - R_t(x,t) Q_t(x), \\
		\label{Ptdt}
		\frac{\partial}{\partial t} P_t(x) & = - R_t(x,t) P_t(x),
		\end{align}
and identities for the $x$-derivatives of $Q_t$ and $P_t$,
\begin{align}
		\label{Qtdx}
		Q_t'(x) & = P_t(x) + q(t) R_t(x,t) - u(t) Q_t(x), \\
		\label{Ptdx}
		P_t'(x) & = x Q_t(x) + p(t) R_t(x,t) + u(t) P_t(x) - 2 v(t) Q_t(x). 
		\end{align}
The identities \eqref{Rtdt}--\eqref{Ptdx}
of course extend into the complex plane.

Also for $k=1,2$, we have that $F$ satisfies \eqref{F:ode}. Since the integrals
in \eqref{eq:psi1} and \eqref{eq:psi2} converge for $k=1,2$, and the identities
\eqref{Rtdt}--\eqref{Ptdt} and \eqref{Qtdx}--\eqref{Ptdx} remain valid for $x$ in
the complex plane, we can follow the proof in \cite[section 5.2]{Del2}, making proper
modifications due to some change in notation. 
For convenience of the reader we provide the detailed calculations in 
the appendix (section \ref{sec:appendix}) using the notations of this paper.
\end{proof}

\subsection{Proof of Theorem \ref{theo:ODEsolutions}} \label{proof:ODEsolutions}
We can now prove Theorem \ref{theo:ODEsolutions}
\begin{proof} 
In view of Lemmas \ref{lem:mandpsi} and \ref{lem:psisolutions} we find three solutions of \eqref{m:ODE}. 
After multiplication by appropriate constants these are the solutions $m^{(0)}$, $m^{(2)}$
and $m^{(4)}$ given by the formulas \eqref{m(0)}, \eqref{m(2)}, \eqref{m(4)} in
 Theorem \ref{theo:ODEsolutions}.

The other solutions follow from a symmetry in the system \eqref{m:ODE}.
Namely, if $m(z)$ is  a solution of \eqref{m:ODE} then
\[ \begin{pmatrix} J & O \\ O & - J \end{pmatrix} m(-z), \qquad J = \begin{pmatrix} 0 & 1 \\ 1 & 0 \end{pmatrix} \]
solves \eqref{m:ODE} as well, but with the change of parameters
\begin{equation} \label{parameterchange} r_1 \leftrightarrow r_2, \quad   s_1 \leftrightarrow s_2, \quad \tau \mapsto \tau.
\end{equation}
The constants $C$, $t$ and $\mu$ do not change under this change of parameters,  but
\[ \gamma \mapsto \gamma^{-1}, \qquad \lambda \mapsto -\lambda, \]
see the formulas \eqref{eq:C}--\eqref{eq:t} and \eqref{eq:lambdamu}.
Thus one solution of \eqref{m:ODE} leads to another by a change of sign $z \mapsto -z$,
a change of parameters \eqref{parameterchange}, combined with an interchange $m_1 \leftrightarrow m_2$. 
In this way the solutions $m^{(0)}$, $m^{(2)}$ and $m^{(4)}$ lead to the
solutions $m^{(3)}$, $m^{(5)}$, and $m^{(1)}$. 

This completes the proof of Theorem \ref{theo:ODEsolutions}. 
\end{proof}

\subsection{Proof of Theorem \ref{theo:RHsolution}}  \label{proof:RHsolution}

\begin{proof}
The column vectors of $M$ are in each sector $\Omega_k$ a basis of the vector space
of solutions of \eqref{m:ODE}.   In each sector the  four different  columns represent the 
four different types of asymptotic behavior, as given by \eqref{eq:asymptoticM}. 
Namely, if $e_k$ denotes the $k$th unit vector, 
\begin{equation} \label{eq:Mcolumns}
\begin{aligned}
 M(z) e_1 & \sim e^{-\theta_1(z) + \tau z} \\
	M(z) e_2 & \sim e^{-\theta_2(z) - \tau z} \\
 M(z) e_3 & \sim e^{\theta_1(z) + \tau z} \\
	M(z) e_4 & \sim e^{\theta_2(z) - \tau z} 
	\end{aligned}
	\end{equation}
as $z \to \infty$.

Distinguished solutions of \eqref{m:ODE} are those solutions
that are recessive in a certain sector, i.e., they are smallest possible as $z \to \infty$
in that sector. Recessive solutions are unique up to a multiplicative constant.
From \eqref{eq:Mcolumns} and \eqref{eq:thetas} one sees that
$M(z) e_1$ is a recessive solution of \eqref{m:ODE} in
the sector $S_3$, $M(z) e_2$ is a recessive solution in $S_0$,
$M(z) e_3$ is a recessive solution in  sectors $S_1$ and $S_5$
and $M(z) e_4$ is recessive in sectors $S_2$ and $S_4$.

It will turn out that $m^{(j)}$ is the recessive solution of \eqref{m:ODE} 
in sector $S_j$. This then implies, for example, that $m^{(0)}$ is a multiple of
$M(z) e_2$ in $S_0$. The constant $\sqrt{2\pi} r_2^{1/6}$ in the definition 
\eqref{m(0)} of $m^{(0)}$ has been chosen so that $M(z) e_2 = m^{(0)}$ in $S_0$. 
Since $S_0$ has a non-empty intersection with both $\Omega_0$ and $\Omega_5$ it
then follows that $m^{(0)}$ appears as the second column of $M$ in sectors
$\Omega_0$ and $\Omega_5$. 

Similarly, $m^{(1)}$ is in the third column of $M$ in the sectors $\Omega_0$ and $\Omega_1$,
$m^{(2)}$ is in the fourth column of $M$ in $\Omega_1$ and $\Omega_2$,
and so on.

This leads to the partial solution of the RH problem given in Figure~\ref{fig:recessiveM}.

\begin{figure}[t]
%\vspace{14mm}
\begin{center}
   \setlength{\unitlength}{1truemm}
   \begin{picture}(100,70)(-5,2)
       \put(40,40){\line(1,0){40}}
       \put(40,40){\line(-1,0){40}}
   %    \put(40,40){\line(3,1){30}}
   %    \put(40,40){\line(3,-1){30}}
   %    \put(40,40){\line(-3,1){30}}
   %    \put(40,40){\line(-3,-1){30}}
       \put(40,40){\line(2,3){15}}
       \put(40,40){\line(2,-3){15}}
       \put(40,40){\line(-2,3){15}}
       \put(40,40){\line(-2,-3){15}}
       \put(40,40){\thicklines\circle*{1}}
       \put(39.3,36){$0$}
       \put(60,40){\thicklines\vector(1,0){.0001}}
       \put(20,40){\thicklines\vector(-1,0){.0001}}
    %   \put(55,45){\thicklines\vector(3,1){.0001}}
    %   \put(55,35){\thicklines\vector(3,-1){.0001}}
    %   \put(25,45){\thicklines\vector(-3,1){.0001}}
    %   \put(25,35){\thicklines\vector(-3,-1){.0001}}
       \put(50,55){\thicklines\vector(2,3){.0001}}
       \put(50,25){\thicklines\vector(2,-3){.0001}}
       \put(30,55){\thicklines\vector(-2,3){.0001}}
       \put(30,25){\thicklines\vector(-2,-3){.0001}}

       \put(54,47){$\small{\begin{pmatrix} * & m^{(0)} & m^{(1)} & * \end{pmatrix}}$}
       \put(24,64){$\small{\begin{pmatrix} * & * & m^{(1)} & m^{(2)} \end{pmatrix}}$}
			
       \put(-8,47){$\small{\begin{pmatrix} m^{(3)} & * & * & m^{(2)} \end{pmatrix}}$}
       \put(-7,29){$\small{\begin{pmatrix} m^{(3)} & *  & * & m^{(4)} \end{pmatrix}}$}
       \put(24,14){$\small{\begin{pmatrix} * & * & m^{(5)} & m^{(4)} \end{pmatrix}}$}
       \put(53,29){$\small{\begin{pmatrix} * & m^{(0)} & m^{(5)} & * \end{pmatrix}}$}			  
  \end{picture}
	\caption{Partial solution of the tacnode RH problem with only those columns that are
	recessive solutions of \eqref{m:ODE} in parts of certain sectors. The $*$ entries denote
		columns that still have to be determined. \label{fig:recessiveM}}
\end{center}
\end{figure}
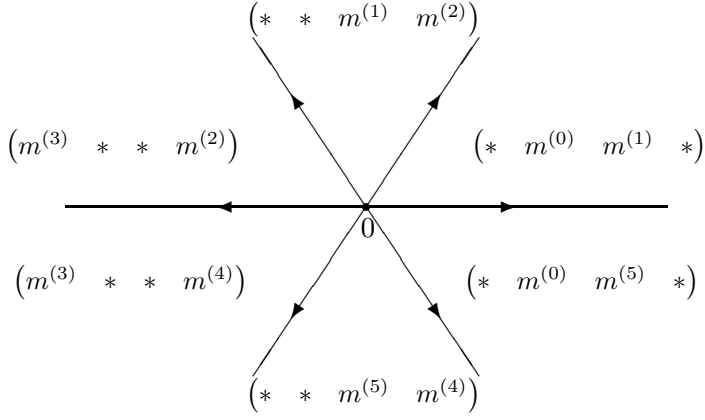

Then we complete the solution by following the effect of the jump matrices
$J_k$, see Figure \ref{fig:tacnodejumps}, and this leads to the full solution
of the tacnode RH problem as given in Figure \ref{fig:solutionM}.

\medskip

We prove in more detail that $m^{(0)}$ is the recessive solution of \eqref{m:ODE} in the sector $S^{(0)}$
and that $m^{(0)} = Me_2$ in $\Omega_0 \cup \Omega_5$. The arguments for the other
solutions $m^{(j)}$ can be done in a similar way.

The asymptotic condition \eqref{eq:asymptoticM} tells us that 
\begin{equation}  \label{M22asympt} 
	M_{22} (z) = \left( \frac{1}{\sqrt{2}} z^{-1/4} + O(z^{-3/4}) \right) e^{- \theta_2(z) - \tau z} 
	\end{equation}
as $z \to \infty$. It will be enough to show that $m_2^{(0)}$ has the same asymptotic behavior
as $z \to \infty$ in the sector $S_0$ in order to conclude that $m^{(0)} = Me_2$ in $\Omega_0 \cup \Omega_5$.

Recall that $m_2^{(0)}$ is given in \eqref{m(0)}.
Since $\lambda - r_2^2 \mu = - \tau$ we can write
\begin{multline} \label{m2(0)} 
	m_2^{(0)}(z) = \sqrt{2\pi} r_2^{1/6} e^{-\tau z}  \Ai\left(r_2^{2/3} z+ \tfrac{2s_2}{r_2^{1/3}} \right)  \\
	 +  \sqrt{2\pi} r_2^{1/6} e^{-\tau z} \int_t^{\infty} 
	\Ai\left(r_2^{2/3} (z + C(x-t)) + \tfrac{2s_2}{r_2^{1/3}}\right) e^{-r_2^2 C \mu(x-t)} R_t(x,t) dx. 
	\end{multline}
From the asymptotic behavior of the Airy function, see \eqref{Airyasympt}, 
we have as $z \to \infty$ with $|\arg z| < \pi - \varepsilon$,
\begin{align} \Ai\left(r_2^{2/3} z + \tfrac{2s_2}{r_2^{1/3}}\right) 
		 = \frac{1}{2 \sqrt{\pi} r_2^{1/6} z^{1/4}}
		e^{- \theta_2(z)}  \left( 1  + O(z^{-1/2}) \right)   \label{Airyrescaledasymptotics}
		\end{align}
(see \eqref{eq:thetas} for $\theta_2$), and
\begin{align*} \Ai\left(r_2^{2/3} (z+C(x-t)) + \tfrac{2s_2}{r_2^{1/3}}\right) 
		= \frac{1}{2 \sqrt{\pi} r_2^{1/6} z^{1/4}}
		e^{- \theta_2(z) - r_2 C(x-t) z^{1/2}} \left( 1 + O(z^{-1/2}) \right)
		\end{align*}
with a $O(z^{-1/2})$ term that is uniform for $x \geq t$ in case $|\arg z| < \pi/6$.
Thus the first term in the right-hand side of \eqref{m2(0)} is 
\begin{equation} \label{m2(0)firstterm} 
	\left( \frac{1}{\sqrt{2}} z^{-1/4} + O(z^{-3/4}) \right)   e^{-\theta_2(z) - \tau z} 
	\end{equation}
as $z \to \infty$, while the second term is (for $z \to \infty$ in $S_0$)
\begin{equation} \label{m2(0)secondterm1}
 O(z^{-1/4}) e^{-\theta_2(z) - \tau z}  \int_t^{\infty} e^{-r_2 C(x-t) z^{1/2}}
	e^{-r_2^2 C \mu(x-t)} R_t(x,t) dx.
	\end{equation}
As $z \to \infty$ in $S_0$, the main contribution to the integral in 
\eqref{m2(0)secondterm1} comes from the endpoint $x = t$. A crude form of Laplace's method,
see e.g.\ \cite{Mil}, shows that the integral is $O(z^{-1/2})$ and therefore we find that the second term 
in the right-hand side of \eqref{m2(0)} is $O(z^{-3/4}) e^{-\theta_2(z) - \tau z}$. Together
with \eqref{m2(0)firstterm} this gives
\begin{equation} \label{m2(0)asympt}
m_2^{(0)}(z) = \left( \frac{1}{\sqrt{2} } z^{-1/4} + O(z^{-3/4}) \right) e^{-\theta_2(z) - \tau z}  	
\end{equation}
as $z \to \infty$ in $S_0$,
which indeed agrees with \eqref{M22asympt}.

This completes the proof of Theorem  \ref{theo:RHsolution}.
\end{proof}

\begin{remark}
A more refined application of Laplace's method would lead to 
\begin{align*}
m_2^{(0)}(z) = \left( z^{-1/4} + \frac{u(t)}{r_2 C}  z^{-3/4} + \cdots \right) \frac{e^{-\theta_2(z) - \tau z}}{\sqrt{2}}
\end{align*}
and similarly
\begin{align*}
m_1^{(0)}(z) 
	= 		\left(- \frac{q(t)}{\gamma \sqrt{r_1r_2} C}  z^{-3/4} + 
		\frac{(r_2^2 \mu + s_2^2) C q(t) - p(t)}{\gamma \sqrt{r_1 r_2} r_2 C^2} z^{-5/4} 
				+ O(z^{-7/4}) \right)  \frac{e^{-\theta_2(z) - \tau z}}{\sqrt{2}}
			\end{align*}
where it is used that $Q_t(t) = q(t)$, $Q_t'(t)  = p(t)$ and $R_t(t,t) = u(t)$.
This leads to formulas for certain entries in $M^{(1)}$ (this is the residue matrix from
the asymptotic condition, see \eqref{eq:asymptoticM}), but we do not discuss that here.
\end{remark}

\section{Correlation kernels} \label{sec:kernels}
As already mentioned in the introduction, there are two correlation kernels
in random matrix theory that can be expressed in terms of the tacnode RH problem,
namely the one-time correlation kernel for the tacnode process \cite{DKZ},
and a critical kernel in the two-matrix model \cite{DG}. The implications of
the explicit form of the solution of the tacnode RH problem for
the tacnode process was discussed by Delvaux \cite{Del2}, who made the connection between
\cite{DKZ} and the different sets of formulas derived in \cite{AFvM,AJvM,FV,Joh}. 
We briefly discuss it in the section \ref{tacnodekernel}. 

The critical kernel in the two-matrix model is due to Duits and Geudens \cite{DG}.
Theorem \ref{theo:RHsolution} yields an explicit integral
representation for this correlation kernel, as we show in section \ref{DGkernel}.

\subsection{Tacnode kernel} \label{tacnodekernel}

The tacnode kernel  is
\begin{equation} \label{Ktac1} 
	K^{tac}(x,y; r_1, r_2, s_1, s_2, \tau) = \frac{1}{2\pi i(x-y)} \begin{pmatrix} 0 & 0 & 1 & 1 \end{pmatrix}
	\widehat{M}^{-1}(y) \widehat{M}(x) \begin{pmatrix} 1 \\ 1 \\ 0 \\ 0 \end{pmatrix} 
	\end{equation}
where $\widehat{M}$ denotes the analytic continuation of the restriction of $M$ to the region
$\Omega_1$ around the positive imaginary axis. This was obtained for  $\tau = 0$
in \cite[Definition 2.6]{DKZ}\footnote{There is a misprint in formula (2.47) in \cite{DKZ}.
It should be $M^{-1}(v) M(u)$ instead of $M^{-1}(u) M(v)$.}  and for general $\tau \in \mathbb R$ in \cite[section 2.2]{Del2}.
Thus by Theorem \ref{theo:RHsolution} we have
\begin{equation} \label{widehatM} 
	\widehat{M} = \begin{pmatrix} m^{(3)} & m^{(0)} & m^{(1)} & m^{(2)} \end{pmatrix},
	\end{equation}
see also Figure \ref{fig:solutionM}.

Because of the symmetry, see \cite[Lemma 5.1]{Del1} or \cite[Lemma 3.1]{Del2},
\begin{equation} \label{Minverse} 
	M^{-1}(z; \tau) = \begin{pmatrix} O & -I \\ I & O \end{pmatrix} M(z; - \tau)^T \begin{pmatrix} O & I \\ -I & O \end{pmatrix} 
	\end{equation}
(we use $M(z; \tau)$ to denote the dependence on $\tau$, and $I$ is the $2 \times 2$ identity matrix),
we can also write
\begin{multline} \label{Ktac2}
 K^{tac}(x,y; r_1, r_2, s_1, s_2, \tau) \\ = \frac{1}{2\pi i(x-y)} \begin{pmatrix} 1 & 1 & 0 & 0 \end{pmatrix}
	 \widehat{M}(y; - \tau)^T \begin{pmatrix} O & I \\ -I & O \end{pmatrix}
	 \widehat{M}(x; \tau) \begin{pmatrix} 1 \\ 1 \\ 0 \\ 0 \end{pmatrix}. 
	\end{multline} 
Thus $K^{tac}$ only depends on the sum of the first two columns of $\widehat{M}$.
If we put 
\begin{equation} \label{eq:hatm}
	\widehat{m} = m^{(0)} + m^{(3)}
	\end{equation} 
	then we get by \eqref{widehatM} and \eqref{Ktac2}
\begin{multline} \label{Ktac3}
	K^{tac}(x,y; r_1, r_2, s_2, s_2, \tau) = \frac{1}{2\pi i (x-y)} 
		 \widehat{m}(y; -\tau)^T \begin{pmatrix} O & I \\ -I & O \end{pmatrix} \widehat{m}(x; \tau)  \\
	= \frac{1}{2\pi i(x-y)}
		\left(-\widehat{m}_1 (x; \tau) \widehat{m}_3 (y, -\tau) + \widehat{m}_3(x; \tau) \widehat{m}_1 (y, -\tau) \right.
		 \\
		 \quad \left.
		 - \widehat{m}_2 (x; \tau) \widehat{m}_4 (y, -\tau)  + \widehat{m}_4 (x; \tau) \widehat{m}_2 (y, -\tau) 	\right).
		\end{multline}

A remarkably simple expression for $\left(\frac{\partial}{\partial x} + \frac{\partial}{\partial y} \right)
	K^{tac}(x,y)$ can be obtained from \eqref{Ktac1}. Using the differential equation \eqref{M:ODE} for $\widehat{M}$ 
	and the formula \eqref{eq:U} for $U$, we obtain
\begin{multline*} 
\left(\frac{\partial}{\partial x} + \frac{\partial}{\partial y} \right) 
	K^{tac}(x,y)  = \\
		\frac{1}{2\pi i(x-y)} \begin{pmatrix} 0 & 0 & 1 & 1 \end{pmatrix} \widehat{M}^{-1}(y) \left(U(x) - U(y)\right) \widehat{M}(x) 
		\begin{pmatrix} 1 \\ 1\\ 0 \\ 0 \end{pmatrix} \\
			 = 
			\frac{1}{2\pi} \begin{pmatrix} 0 & 0 & 1 & 1 \end{pmatrix} \widehat{M}^{-1}(y) \left( r_1 E_{3,1} - r_2 E_{4,2} \right)
				 \widehat{M}(x) 
		\begin{pmatrix} 1 \\ 1\\ 0 \\ 0 \end{pmatrix} 
		\end{multline*}
where $E_{j,k}$ is the matrix with $1$ in position $j,k$ and $0$ otherwise.
Combining this with \eqref{Minverse} we get
\begin{align}  \label{Ktac4}
\left(\frac{\partial}{\partial x} + \frac{\partial}{\partial y} \right) 
	K^{tac}(x,y)
%	= 
%	\frac{1}{2\pi} \begin{pmatrix} 0 & 0 & 1 & 1 \end{pmatrix} \widehat{M}(y; - \tau)^T \diag \begin{pmatrix} r_1 & r_2 & 0 & 0 \end{pmatrix}
%	 \widehat{M}(x; \tau) \begin{pmatrix} 1 \\ 1\\ 0 \\ 0 \end{pmatrix}  \\ 
	 = 	\frac{1}{2\pi} \left(r_1 \widehat{m}_1(y;-\tau) \widehat{m}_1(x; \tau)
	-r_2 \widehat{m}_2(y;-\tau) \widehat{m}_2(x; \tau) \right).
	% \sum_{j=1}^2 (-1)^{j-1} r_j \widehat{m}_j(y;-\tau) \widehat{m}_j(x; \tau).
		\end{align}
where $\widehat{m} = m^{(0)} + m^{(3)}$ as in \eqref{eq:hatm}.	

Delvaux \cite{Del2} further analyzed \eqref{Ktac3}, \eqref{Ktac4} using the formulas \eqref{m(0)} and \eqref{m(3)}
for $m^{(0)}$ and $m^{(3)}$, and showed that the expression
for the tacnode kernel agrees with the one given by Ferrari and Vet\H{o} \cite{FV}.
		
\subsection{Duits-Geudens critical kernel} \label{DGkernel}

The Duits-Geudens  kernel appears in a critical regime in the two-matrix model \cite{DK},
where it was obtained from a Riemann-Hilbert analysis based on \cite{DK,DKM}. It
is a remarkable fact that it can be expressed in terms of the solution of the tacnode RH problem
\eqref{eq:jumpsM}--\eqref{eq:asymptoticM} 
with the special choice of parameters 
\begin{equation} \label{DGparameters} 
	r_1 = r_2 = 1, \qquad s_1 = s_2  = s, \qquad \tau \in \mathbb R.
	\end{equation}
With those parameters we have by \eqref{eq:C}--\eqref{eq:t}, and \eqref{eq:lambdamu},
\[ C = 2^{1/3}, \quad \gamma = 1, \quad t = 2^{2/3} (2s - \tau^2),  \quad \lambda = 0, \quad \mu = \tau. \]
The formula for the critical kernel is
\begin{equation} \label{DGkernel1} 
	K^{crit}(x,y;s,\tau) = 
	\frac{1}{2\pi i (x-y)} \begin{pmatrix} -1 & 1 & 0 & 0 \end{pmatrix}
		M(ix)^{-1} M(iy) \begin{pmatrix} 1 \\ 1 \\ 0 \\ 0 \end{pmatrix},
	\end{equation}
which can easily be obtained from the formulas (2.13) and (2.15) in \cite{DG}.
Here $M$ is the solution of the tacnode RH problem  with parameters \eqref{DGparameters}.

We can use \eqref{Minverse} and Theorem \ref{theo:RHsolution} to rewrite \eqref{DGkernel1} as
\begin{multline} \label{DGkernel2} 
	K^{crit}(x,y; s, \tau) 
	\\ =
	\frac{1}{2\pi i (x-y)} \begin{pmatrix} 0 & 0 & 1 & -1 \end{pmatrix}
		M(ix;-\tau)^T \begin{pmatrix} O & I \\ -I & O \end{pmatrix}  M(iy; \tau) \begin{pmatrix} 1 \\ 1 \\ 0 \\ 0 \end{pmatrix} \\
		= \frac{1}{2\pi i (x-y)}
			(m^{(1)} - m^{(2)})(ix, -\tau)^T \begin{pmatrix} O & I \\ -I & O \end{pmatrix} 
			(m^{(0)} + m^{(3)})(iy, \tau).
			\end{multline}
The critical kernel thus depends on $\widehat{m} = m^{(0)} + m^{(3)}$, see \eqref{eq:hatm}, 
which also appeared in the tacnode kernel, and on
\begin{equation} \label{eq:tildem}
	\widetilde{m} = m^{(1)} - m^{(2)} = m^{(5)} - m^{(4)}.
	\end{equation}
The second identity in \eqref{eq:tildem} holds because of \eqref{m:consistent}.
	
Another formula for $K^{crit}$ comes from differentiating \eqref{DGkernel1} 
with respect to $s$.	There are differential equations
$\frac{\partial M}{\partial s_1} = V_1 M$ and $\frac{\partial M}{\partial s_2} = V_2 M$,
which if $s = s_1 =s_2$ as in \eqref{DGparameters} leads to
\[ \frac{\partial M}{\partial s} = VM, \qquad V = V_1 + V_2, \]
and $V$ is given explicitly in \cite[Proposition 5.11]{Del1}. This formula implies
\begin{equation} \label{VxVy} 
	V(x) - V(y) = -2i(x-y) \left( E_{3,1} + E_{4,2} \right),
	\end{equation}
from which it follows that
\begin{align} \nonumber
	\frac{\partial}{\partial s} M^{-1}(ix) M(iy) & =  M^{-1}(ix) \left(V(iy) - V(ix) \right) M(iy) \\
		& = -2 (x-y) M^{-1}(ix) \left(E_{3,1} + E_{4,2} \right) M(iy)
\end{align}
Thus from \eqref{DGkernel1} and \eqref{Minverse} we get
\begin{multline} \label{DGkernel3}
	\frac{\partial}{\partial s} K^{crit}(x,y;s,\tau)  \\
	= 
	\frac{-1}{\pi i} \begin{pmatrix} -1 & 1 & 0 & 0 \end{pmatrix}
		M(ix)^{-1} (E_{3,1} + E_{4,2}) M(iy) \begin{pmatrix} 1 \\ 1 \\ 0 \\ 0 \end{pmatrix} \\
		= 	\frac{-1}{\pi i} \begin{pmatrix} 0 & 0 & 1 & -1  \end{pmatrix}
		M(ix; -\tau)^{T} (E_{1,1} + E_{2,2}) M(iy; \tau) 
		\begin{pmatrix} 1 \\ 1 \\ 0 \\ 0 \end{pmatrix},
	\end{multline}
which is somewhat similar to the expression \eqref{Ktac4} for the tacnode kernel,
except that the two solutions \eqref{eq:hatm} and \eqref{eq:tildem} of the 
ODE \eqref{m:ODE} are now involved.
Indeed, by \eqref{DGkernel3} and the solution of the tacnode RH problem
\begin{multline}  \label{DGkernel4}
	\frac{\partial}{\partial s} K^{crit}(x,y;s,\tau)  
	= 	
	\frac{-1}{\pi i} \widetilde{m}(ix;-\tau)^T (E_{11} + E_{22}) \widehat{m}(iy; \tau) \\
	= \frac{-1}{\pi i} \left[ \widetilde{m}_1(ix; -\tau)  \widehat{m}_1(iy; \tau) 
	+ \tilde{m}_2(ix; -\tau)  \widehat{m}_2(iy; \tau) \right],
	\end{multline}
	which is a rank two kernel.

Let us check that \eqref{DGkernel4} is real-valued.
The symmetries of the tacnode RH problem, see \cite[Lemma 5.1]{Del1},
\begin{align*} 
	\overline{M(z)} & = \begin{pmatrix} I & O \\ O & - I \end{pmatrix}
	M(\overline{z}) \begin{pmatrix} I & O \\ O & - I \end{pmatrix}, \\
	M(-z) & = \begin{pmatrix} J & O \\ O & - J \end{pmatrix}
	M(z) \begin{pmatrix} J & O \\ O & - J \end{pmatrix}, 
		\qquad J = \begin{pmatrix} 0 & 1 \\ 1 & 0 \end{pmatrix}. 
		\end{align*}
imply for $z = ix$ with $x$ real,
\[ \overline{M(ix)} = \begin{pmatrix} J & O \\ O & J \end{pmatrix}
	M(ix) \begin{pmatrix} J & O \\ O & J \end{pmatrix}. \]
In view of the solution for $M$ in Theorem \ref{theo:RHsolution},
this means that for real $x, y$, 
\[ \overline{m_{1}^{(1)}(ix)} = m_2^{(2)}(ix), \quad \overline{m_1^{(2)}(ix)} = m_2^{(2)}(ix), \]
\[ \overline{m_{1}^{(0)}(iy)} = m_2^{(3)}(iy), \quad \overline{m_1^{(3)}(iy)} = m_2^{(0)}(iy). \]
Using this and \eqref{eq:hatm}, \eqref{eq:tildem} in \eqref{DGkernel4}, we find
\begin{equation} \label{DGkernel5}
	\frac{\partial}{\partial s} K^{crit}(x,y;s,\tau)  
	= 	\frac{-2}{\pi} \Im \left[ \tilde{m}_1(ix,s, -\tau)  \widehat{m}_1(iy, s, \tau) \right] 
	\end{equation}
which is real-valued (as it should be).

Since $K^{crit}(x,y; s, \tau) \to 0$ as $s \to +\infty$, we
recover $K^{crit}$ from \eqref{DGkernel5} after integration with respect to $s$
\begin{equation} \label{DGkernel6} 
	K^{crit}(x,y;s,\tau)  
	= 	\frac{2}{\pi} \int_s^{\infty} 
		\Im \left[ \tilde{m}_1 (ix,s', -\tau)  \widehat{m}_1(iy, s', \tau) \right] ds'
	\end{equation}
which is maybe the simplest form for the Duits-Geudens critical kernel.
	
\section{Appendix: Proof of Lemma \ref{lem:psisolutions}} \label{sec:appendix}

\begin{proof}
Throughout the proof of Lemma \ref{lem:psisolutions} we simply 
write $\infty$ instead of $\infty \omega^{2k}$, but it is of
course understood that the integrals extend to infinity in the appropriate direction.

In addition to the differential identities \eqref{Rtdt}--\eqref{Ptdx}
for the functions $Q_t$, $P_t$, and $R_t$, there are further identities
in \cite{TW,TW2} that involve the four functions of the variable $t$ defined by
\begin{align} \label{qpuv}
	q(t)  = Q_t(t), \quad
	p(t)  = P_t(t), \quad
	u(t)  = R_t(t,t), \quad
	v(t)  = \frac{1}{2}(u^2 - q^2).
\end{align}
These four functions satisfy the closed differential system
\begin{align} \label{qpuvdt}
	q' = p - uq, \quad
	p' = tq + uq - 2v q, \quad
	u' = - q^2, \quad
	v' = - pq.
\end{align}

The second order system \eqref{psi:ODE} gives formulas for $\frac{\partial^2}{\partial z^2} \psi_1$
and $\frac{\partial^2}{\partial z^2} \psi_2$. We start by verifying the latter.

\medskip

From \eqref{eq:psi2} and \eqref{F:ode} we obtain
\begin{align*} 
	\frac{\partial^2}{\partial z^2} \psi_2 & = -2 r_2^2 \mu \frac{\partial}{\partial z} \psi_2 
	+ (r_2^2 z + 2 r_2 s_2 - r_2^4 \mu^2) \psi_2  \\
	& \quad + C r_2^2 \int_t^{\infty} (x-t) F(z+C(x-t)) R_t(x,t) dx.
	\end{align*}
Here we use \eqref{Rt}, \eqref{Qtdx} and \eqref{qpuvdt} to obtain
\[ (x-t) R_t(x,t) = pQ_t(x) - q P_t(x) = q^2 R_t(x,t) + q' Q_t(x) - q  Q_t'(x), \]
where $p = p(t)$. Thus by \eqref{eq:psi1} and \eqref{eq:psi2},
\begin{align*} 
	\frac{\partial^2}{\partial z^2} \psi_2 & = -2 r_2^2 \mu \frac{\partial}{\partial z} \psi_2 +  (r_2^2 z + 2 r_2 s_2 - r_2^4 \mu^2) \psi_2  \\
	& + C r_2^2 \int_t^{\infty} F(z+C(x-t)) (q^2 R_t(x,t) + q' Q_t(x) - q Q_t'(x)) dx \\
	& = -2 r_2^2 \mu \frac{\partial}{\partial z} \psi_2 + (r_2^2 z + 2 r_2 s_2 - r_2^4 \mu^2) \psi_2 + C r_2^2 q^2 (\psi_2 - F(z)) \\
	&   - C r_2^2 q'\psi_1  - C r_2^2 q \int_t^{\infty} F(z+C(x-t)) Q_t'(x) dx.
	\end{align*}
We apply integration by parts to the remaining integral. The integrated term is $C r_2^2 q^2 F(z)$  
and so we obtain (again using \eqref{eq:psi1})
\begin{align*} 
	\frac{\partial^2}{\partial z^2} \psi_2  & = -2 r_2^2 \mu \frac{\partial}{\partial z} \psi_2 + (r_2^2 z + 2 r_2 s_2 + Cr_2^2 q^2- r_2^4 \mu^2) \psi_2  \\
	&   - Cr_2^2 q'\psi_1 +  C^2 r_2^2 q \int_t^{\infty} F'(z+C(x-t)) Q_t(x) dx, \\
	& = -2 r_2^2 \mu \frac{\partial}{\partial z} \psi_2 + (r_2^2 z + 2 r_2 s_2 + Cr_2^2 q^2- r_2^4 \mu^2) \psi_2  \\
	&   - C^2 r_2^2 q \frac{\partial}{\partial z} \psi_1 - Cr_2^2 q'\psi_1
	\end{align*}
which is what is needed for  $\frac{\partial^2}{\partial z^2} \psi_2$ according to \eqref{psi:ODE}.

\medskip

The proof for $\frac{\partial^2}{\partial z^2} \psi_1$ is a bit more involved. We have by  \eqref{eq:psi1} and \eqref{F:ode} 
\begin{align} \nonumber
\frac{\partial^2}{\partial z^2} \psi_1 & =  - \int_t^{\infty} F''(z + C(x-t)) Q_t(x) dx \\
& \nonumber = - 2 r_2^2 \mu \frac{\partial}{\partial z} \psi_1 + (r_2^2 z + 2r_2 s_2 - r_2^4 \mu^2 - Cr_2^2 t)  \psi_1 \\
& \quad    - Cr_2^2 \int_t^{\infty} x F(z+ C(x-t)) Q_t(x) dx.
\label{xQtintegral} 
	\end{align}
Here we use \eqref{Ptdx} and \eqref{Qtdx} to obtain
\begin{align*} x Q_t(x) & = 2v Q_t(x) - p R_t(x,t) + P_t'(x) - uP_t(x) \\
	& = - (u^2 - 2v) Q_t(x) - (p-uq) R_t(x,t) + P_t'(x) - u Q_t'(x),
	\end{align*}
	which by \eqref{qpuvdt} and the formula for $v(t)$ in \eqref{qpuv} gives
\[ x Q_t(x) = - q^2 Q_t(x) - q'R_t(x,t) + P_t'(x) - u Q_t'(x). \]
Using this in \eqref{xQtintegral} we find using the definitions \eqref{eq:psi1} and \eqref{eq:psi2},
\begin{align*}
\frac{\partial^2}{\partial z^2} \psi_1 & =  
		- 2 r_2^2 \mu \frac{\partial}{\partial z} \psi_1 + (r_2^2 z + 2r_2 s_2 - r_2^4 \mu^2 - Cr_2^2 t - C r_2^2 q^2)  \psi_1 \\
	& + C r_2^2 q' (\psi_2 - F(z)) - Cr_2^2 \int_t^{\infty} F(z + C(x-t)) (P_t'(x) - uQ_t'(x)) dx. 
	\end{align*}
We integrate by parts on the remaining integral. The term $-Cr_2^2 q' F(z)$ is cancelled by the
integrated term $C r_2^2 (p -u q) F(z)$ because of \eqref{qpuvdt}. Thus
\begin{align*}
\frac{\partial^2}{\partial z^2} \psi_1 & = 
		-2 r_2^2 \mu \frac{\partial}{\partial z} \psi_1 + (r_2^2 z + 2r_2 s_2 - r_2^4 \mu^2 - Cr_2^2 t - C r_2^2 q^2)  \psi_1 \\
	&    + Cr_2^2 q' \psi_2 + C^2 r_2^2 \int_t^{\infty} F'(z+C(x-t)) (P_t(x) - uQ_t(x)) dx. 
	\end{align*}
Now we use \eqref{Qtdx} again and we simplify using \eqref{eq:psi2}
\begin{align*}
\frac{\partial^2}{\partial z^2} \psi_1 & = 
		- 2 r_2^2 \mu \frac{\partial}{\partial z} \psi_1 + (r_2^2 z + 2r_2 s_2 -r_2^4 \mu^2 - Cr_2^2 t - C r_2^2 q^2)  \psi_1 \\
	& + C r_2^2 q' \psi_2  - C^2 r_2^2 \int_t^{\infty} F'(z+ C(x-t)) (qR_t(x,t) - Q_t'(x)) dx \\
	& = 
		- 2 r_2^2 \mu \frac{\partial}{\partial z} \psi_1 + (r_2^2 z + 2r_2 s_2 - r_2^4 \mu^2 - Cr_2^2 t - C r_2^2 q^2)  \psi_1 \\
	& + C r_2^2 q' \psi_2 - C^2 r_2^2 q \left( \frac{\partial}{\partial z} \psi_2 - F'(z) \right) + C^2 r_2^2  \int_t^{\infty} F'(z+ C(x-t)) Q_t'(x) dx.
	\end{align*}
We integrate by parts, the integrated term cancels with $C^2 r_2^2 q F'(z)$ and so
\begin{align*}
\frac{\partial^2}{\partial z^2} \psi_1 & = 
		- 2 r_2^2 \mu \frac{\partial}{\partial z} \psi_1 + (r_2^2 z + 2r_2 s_2 - r_2^4 \mu^2 - Cr_2^2 t - C r_2^2 q^2)  \psi_1 \\
	& - C^2 r_2^2 q  \frac{\partial}{\partial z} \psi_2 + C r_2^2 q' \psi_2  - C^3 r_2^2  \int_t^{\infty} F''(z+ C(x-t)) Q_t(x) dx.
	\end{align*}
The final integral we can express in terms of $\frac{\partial^2}{\partial z^2} \psi_1$ (see the
first identity in \eqref{xQtintegral}) and the result is that
\begin{align*}
\frac{\partial^2}{\partial z^2} \psi_1 & = 
		-2 r_2^2 \mu \frac{\partial}{\partial z} \psi_1 + (r_2^2 z + 2r_2 s_2 - r_2^4 \mu^2 - Cr_2^2 t - C r_2^2 q^2)  \psi_1 \\
	& - C^2 r_2^2 q  \frac{\partial}{\partial z} \psi_2 + C r_2^2 q' \psi_2  + C^3 r_2^2  \frac{\partial^2}{\partial z^2} \psi_1.
	\end{align*}
Rearranging terms we find
\begin{align*}
(C^3 r_2^2 - 1)  \frac{\partial^2}{\partial z^2} \psi_1 & =
	+ C^2 r_2^2 q \frac{\partial}{\partial z} \psi_2 + 2 r_2^2 \mu \frac{\partial}{\partial z} \psi_1 \\
	& - C r_2^2 q' \psi_2 + (- r_2^2 z - 2r_2s_2 + r_2^4 \mu^2 + C r_2^2 t + C r_2^2 q^2) \psi_1.
	\end{align*}

Since $C^3 r_2^2 - 1 = (r_2/r_1)^2$, this is
\begin{align*}	
\frac{\partial^2}{\partial z^2} \psi_1 & =
	C^2 r_1^2 q \frac{\partial}{\partial z} \psi_2 + 2 r_1^2 \mu \frac{\partial}{\partial z} \psi_1 \\
	& - C r_1^2 q' \psi_2 + (-r_1^2 z - 2 \frac{r_1^2}{r_2} s_2 + r_1^2 r_2^2 \mu^2 + C r_1^2 t + C r_1^2 q^2) \psi_1.
	\end{align*}
Since $ t= C^{-1} \left( 2\frac{s_1}{r_1} + 2\frac{s_2}{r_2} - (r_1^2 + r_2^2) \mu^2 \right)$, we
finally obtain
\begin{align*}	
\frac{\partial^2}{\partial z^2} \psi_1 & =
	C^2 r_1^2 q \frac{\partial}{\partial z} \psi_2 + 2 r_1^2 \mu \frac{\partial}{\partial z} \psi_1 \\
	& - C r_1^2 q' \psi_2 + (-r_1^2 z + 2 r_1 s_1 - r_1^4 \mu^2 + C r_1^2 q^2) \psi_1
	\end{align*}
	as claimed in  \eqref{psi:ODE}.
\end{proof}	

\section*{Acknowledgements}

I am grateful to Steven Delvaux and Dries Geudens for useful discussions
about the tacnode problem and the critical kernels.

The author is supported by KU Leuven Research Grant No. OT/12/073,
the Belgian Interuniversity Attraction Pole P07/18,
FWO Flanders projects G.0641.11 and G.0934.13, and 
by Grant No. MTM2011-28952-C02 of the Spanish Ministry of Science and Innovation.

\end{document}